\newcommand{\ud}[0]{\,\mathrm{d}}
\newcommand{\abs}[1]{|#1|}
\newcommand{\Babs}[1]{\Big|#1\Big|}
\newcommand{\Norm}[2]{\|#1\|_{#2}}
\newcommand{\BNorm}[2]{\Big\|#1\Big\|_{#2}}
\newcommand{\pair}[2]{\langle #1,#2 \rangle}
\newcommand{\supp}[0]{\operatorname{supp}}
\newcommand{\R}{\mathbb{R}}
\newcommand{\Z}{\mathbb{Z}}
\newcommand{\prob}[0]{\mathbb{P}}
\def\cyr{\fontencoding{OT2}\fontfamily{wncyr}\selectfont}
\DeclareTextFontCommand{\textcyr}{\cyr}
\swapnumbers \numberwithin{equation}{section}
\theoremstyle{plain}
\newtheorem{theorem}[equation]{Theorem}
\newtheorem{proposition}[equation]{Proposition}
\newtheorem{corollary}[equation]{Corollary}
\newtheorem{lemma}[equation]{Lemma}
\theoremstyle{definition}
\theoremstyle{remark}
\newtheorem{remark}[equation]{Remark}
\begin{document}

\title[Non-probabilistic proof of the $A_2$ theorem]{Non-probabilistic proof of the $A_2$ theorem, and sharp weighted bounds for the $q$-variation of singular integrals}

\author[T.~Hyt\"onen]{Tuomas P.~Hyt\"onen}
\address{Department of Mathematics and Statistics, P.O.B.~68 (Gustaf H\"all\-str\"omin katu~2b), FI-00014 University of Helsinki, Finland}
\email{tuomas.hytonen@helsinki.fi}
\thanks{T.H. is supported by the European Union through the ERC Starting Grant ``Analytic--probabilistic methods for borderline singular integrals'', and by the Academy of Finland, grants 130166 and 133264.}

\author[M.~T.~Lacey]{Michael T. Lacey}
\address{School of Mathematics \\
Georgia Institute of Technology \\
Atlanta GA 30332 }
\email{lacey@math.gatech.edu}
\thanks{M.L. supported in part by the NSF grant 0968499, and a grant from the Simons Foundation (\#229596 to Michael Lacey). }

\author[C.~P\'erez]{Carlos P\'erez}
\address{
Departamento de An\'alisis Matem\'atico, Facultad de Matem\'aticas,
Universidad De Sevilla, 41080 Sevilla, Spain
}
\email{carlosperez@us.es}
\thanks{C. P. was supported by the Spanish Ministry of Science and Innovation grant MTM2009-08934 and by the Junta de Andaluc\'ia, grant FQM-4745.}

\date{\today}

\keywords{Calder\'on--Zygmund operator, Muckenhoupt weight, variation norm, $A_2$ theorem, dyadic shift}
\subjclass[2010]{42B20, 42B25}



\begin{abstract}
Any Calder\'on--Zygmund operator $T$ is pointwise dominated by a convergent sum of positive dyadic operators. We give an elementary self-contained proof of this fact, which is simpler than the probabilistic arguments used for all previous results in this direction. Our argument also applies to the $q$-variation of certain Calder\'on--Zygmund operators, a stronger nonlinearity than the maximal truncations. As an application, we obtain new sharp weighted inequalities.
\end{abstract}

\maketitle

\section{Introduction}

The following sharp weighted inequality for Calder\'on--Zygmund operators, known as the $A_2$ theorem, was proved by the first author \cite{Hytonen:A2} in full generality after many intermediate results by others:
\begin{equation}\label{eq:A2}
  \Norm{Tf}{L^2(w)}\lesssim [w]_{A_2}\Norm{f}{L^2(w)}.
\end{equation}
After the first proof, several modifications, simplifications and extensions have appeared. However, all of them have been based on the ``dyadic representation theorem'' from \cite{Hytonen:A2}, namely, the probabilistic formula
\begin{equation}\label{eq:dyRepThm}
  \pair{g}{Tf}=\int_\Omega\sum_{m,n=0}^{\infty}2^{-\max\{m,n\}\alpha/2}\pair{g}{S^{\omega}_{m,n}f}\ud\prob(\omega),
\end{equation}
where $T$ is written as an average of the dyadic shifts $S^\omega_{m,n}$ with respect to random dyadic systems $\mathscr{D}^\omega$ parameterized by a probability space $\Omega$. The proof of this representation is an elaboration of the proof of the $T(1)$ theorem of David--Journ\'e, or more precisely, of the non-homogeneous version of Nazarov--Treil--Volberg \cite{NTV:Tb}.

In this paper, we provide a simpler, non-probabilistic dyadic representation. Strictly speaking, the representation with an exact identity as above is now replaced with a domination, which gives an upper bound for $Tf$ in terms of a series of dyadic shifts. We still need more than one dyadic system, but only a finite collection (in fact, $3^d$) of them suffices. And the proof only uses the kernel bounds in a rather standard way, as well as the weak $(1,1)$ bounds for $T$ as a black box, rather than going through the full machinery of the $T(1)$ theorem, which in effect reproves the boundedness of $T$ as a by-product. This has the advantage that the new approach extends more flexibly to other kinds of operators for which unweighted weak $(1,1)$ bounds are known. In particular, for the first time, we extend \eqref{eq:A2} to the $q$-variation operator $V_q^\phi T$, defined by
\begin{equation}\label{eq:defVar}
   V_q^\phi Tf(x):=\sup_{\{\epsilon_i\}_{i\in\Z}}\Big(\sum_{i\in\Z}\abs{T_{\epsilon_i}^\phi f(x)-T_{\epsilon_{i+1}}^\phi f(x)}^q\Big)^{1/q},
\end{equation}
 where $T_\epsilon^\phi$ is a smooth truncation of a (suitable) Calder\'on--Zygmund operator $T$, and the supremum is over all increasing sequences of positive numbers. Unweighted norm inequalities for these operators have been studied by Campbell et al. \cite{CJRW:Hilbert,CJRW:higher}, among others.

For future considerations, another advantage of the new dyadic domination theorem is that it can also be used to derive estimates for $T$ from the estimates for dyadic shifts in \emph{quasi-normed} spaces like $L^{1,\infty}$: We only have a finite summation over $3^d$ different dyadic systems, which is also well-behaved with respect to the quasi-triangle inequality, unlike the integration over a probability space in the dyadic representation theorem \eqref{eq:dyRepThm}.

In the next two sections, we prove the new dyadic domination theorem for maximal truncations of Calder\'on--Zygmund operators and the $q$-variation of a class of Calder\'on--Zygmund operators, respectively. Corollaries on weighted norm inequalities are presented in the final section.

Let us notice that another ``dyadic domination theorem'', closely related to ours, was almost simultaneously brought out by Lerner \cite{Lerner:posDyad}. There is, however, an important difference: Lerner's result is derived as a \emph{corollary} to the dyadic representation theorem \eqref{eq:dyRepThm} and some non-trivial subsequent developments, whereas our proof is essentially self-contained and provides an \emph{alternative} to \eqref{eq:dyRepThm}. This new approach also applies to the stronger variation operators \eqref{eq:defVar}. See, however, Lerner's paper \cite{Lerner:posDyad} for some different applications.

\section{The dyadic domination theorem for maximal truncations}

Let $T$ be an $\omega$-Calder\'on--Zygmund operator with kernel $K$ having the standard size estimate
\begin{equation*}
  \abs{K(x,y)}\lesssim\frac{1}{\abs{x-y}^d}
\end{equation*}
and the modulus of continuity $\omega$:
\begin{equation*}
  \abs{K(x,y)-K(x',y)}+\abs{K(y,x)-K(y,x')}\leq C\omega\Big(\frac{\abs{x-x'}}{\abs{x-y}}\Big)\frac{1}{\abs{x-y}^d}
\end{equation*}
for $\abs{x-y}>2\abs{x-x'}>0$. Let $\phi$ be a smooth cut-off with $1_{B(0,1)^c}\leq\phi\leq 1_{B(0,\tfrac12)^c}$, and $\chi:=1_{B(0,1)^c}$ the sharp cut-off. Then
\begin{equation*}
  K^\phi_{\epsilon}(x,y):=\phi\Big(\frac{x-y}{\epsilon}\Big)K(x,y),\qquad
  K_\epsilon(x,y):=K^\chi_\epsilon(x,y).
\end{equation*}
Note that $K^\phi_\epsilon$ is also an $\omega$-Calder\'on--Zygmund kernel, uniformly in $\epsilon>0$. Finally, we define the truncated operators
\begin{equation*}
   T^\phi_{\epsilon}f(x):=\int K^\phi_{\epsilon}(x,y)f(y)\ud y,\qquad
  T^\phi_*f(x):=\sup_{\epsilon>0}\abs{T^\phi_{\epsilon}f(x)}
\end{equation*}
and $T_\epsilon:=T^\chi_\epsilon$, $T_*:=T^\chi_*$.

\begin{theorem}\label{thm:dyadicDomination}
Let $T$ be an $\omega$-Calder\'on--Zygmund operator, where the modulus of continuity satisfies the Dini condition
\begin{equation}\label{eq:Dini}
  \int_0^1\omega(t)\frac{\ud t}{t}<\infty.
\end{equation}
Let $Q_0$ be a cube which contains the support of $f$.
Then we have the pointwise bound
\begin{equation*}
    1_{Q_0}T_*f
    \lesssim Mf+\sum_{u\in\{0,\tfrac12,\tfrac13\}^d}\sum_{k=0}^{\infty}\omega(2^{-k})S^u_k\abs{f},
\end{equation*}
where $S^u_k$ is a positive dyadic shift of complexity $k$ with respect to the dyadic system
\begin{equation*}
  \mathscr{D}^u:=\{2^{-j}([0,1)^d+m+(-1)^j u):j\in\Z,m\in\Z^d\}.
\end{equation*}
The dyadic shifts depend upon the choice of $Q_0$ and $f$.  
\end{theorem}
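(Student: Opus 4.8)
The plan is to run a stopping-time (sparse) argument directly from the size and smoothness bounds on $K$ together with the weak-type $(1,1)$ estimate for $T$ (used only as a black box), in the spirit of Lerner's proof of the $A_2$ theorem but without passing through the dyadic representation formula \eqref{eq:dyRepThm}. The finitely many shifted systems $\mathscr D^u$ are needed for two distinct reasons: to replace the continuous truncation parameter $\epsilon$ by a comparable \emph{dyadic} scale, and to replace the non-dyadic dilates of the stopping cubes that appear below by honestly dyadic cubes of comparable side length.

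\emph{Step 1: discretizing the truncation.} For $x\in Q_0$ one may take $\epsilon\le\operatorname{diam}Q_0$, since larger $\epsilon$ give $T_\epsilon f(x)=0$ as $\supp f\subseteq Q_0$. For such $\epsilon$, the standard lemma on adjacent dyadic systems produces $u$ and $Q\in\mathscr D^u$ with $x\in Q$, $B(x,\epsilon)\subseteq Q$ and $\ell(Q)\approx\epsilon$; then $(3Q)^c\subseteq\{|x-y|>\epsilon\}$, and on the ``collar'' $\{|x-y|>\epsilon\}\setminus(3Q)^c\subseteq 3Q$ one has $|x-y|\approx\epsilon$, so the size bound on $K$ contributes only $\lesssim\langle|f|\rangle_{3Q}\lesssim Mf(x)$. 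Hence, writing $U_Qf(x):=\int_{(3Q)^c}K(x,y)f(y)\ud y$,
\[
  1_{Q_0}(x)\,T_*f(x)\ \lesssim\ Mf(x)\ +\ \sup_{Q\ni x}\,|U_Qf(x)|,
\]
the supremum over dyadic $Q$ in the systems $\mathscr D^u$ containing $x$; only those with $3Q\not\supseteq Q_0$ are relevant.

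\emph{Step 2: stopping family and kernel smoothness.} Since $M$ and $T_*$ --- and hence $f\mapsto\sup_{Q\ni x}|U_Qf(x)|$, which is $\lesssim T_*f+Mf$ pointwise --- are of weak type $(1,1)$ (for $T_*$, by Cotlar's inequality and the assumed weak $(1,1)$ bound for $T$), I would carry out the usual recursive stopping construction inside $Q_0$: at each stage discard the set where $\langle|f|\rangle$ or the relevant truncated quantity much exceeds its reference value on the current cube, cover it by maximal dyadic subcubes, iterate. This yields a sparse family $\mathcal S$. What turns the resulting decomposition into the stated bound is the smoothness estimate: for a cube $P$ and $x,x'\in P$,
\[
  \Big|\int_{(3P)^c}\big(K(x,y)-K(x',y)\big)f(y)\ud y\Big|\ \lesssim\ \sum_{k\ge 1}\omega(2^{-k})\,\langle|f|\rangle_{2^{k+1}P},
\]
obtained by splitting $(3P)^c$ into the dyadic annuli $2^{k+1}P\setminus 2^kP$, on which $|x-y|\approx 2^k\ell(P)$ and $|x-x'|/|x-y|\lesssim 2^{-k}$, and applying the modulus-of-continuity bound (with the finitely many smallest annuli treated by the size bound). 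The Dini condition makes the series converge --- and it is $\lesssim Mf$ on $P$ --- but it is the \emph{unsummed} series that we retain: each summand $\omega(2^{-k})\langle|f|\rangle_{2^{k+1}P}\,1_P$, after $2^{k+1}P$ is replaced by a cube $R\in\mathscr D^u$ with $2^{k+1}P\subseteq R$ and $\ell(R)\approx 2^k\ell(P)$ (so that $P$ lies $k+O(1)$ generations below $R$ in $\mathscr D^u$), is a summand of $\omega(2^{-k})S^u_k|f|$ for the positive dyadic shift $S^u_k|f|:=\sum_{P\in\mathcal S}\langle|f|\rangle_{R(P,k,u)}\,1_P$ of complexity $k+O(1)$; summing over $P\in\mathcal S$ with $x\in P$, over $k$, and over the $3^d$ systems (and absorbing the $O(1)$ shift in complexity by the routine reindexing) yields the claim.

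\emph{The main obstacle.} The delicate point is the maximal truncation rather than $T$ alone: unlike for $T$, the oscillation of $T_*f$ over a stopping cube $P$ is \emph{not} controlled by the smoothness series above, since $T_*(f1_{(3P)^c})$ may be of size $\sim Mf$ on all of $P$ --- which is exactly why the term $Mf$ appears in the conclusion. The remedy is to separate, at each scale, the ``coarse'' truncations already accounted for at earlier stages of the recursion (whose oscillation across $P$ \emph{is} controlled, by the annular estimate applied with $x,x'$ at distance $\lesssim\ell(P)$ and $y$ outside three times a coarser cube) from the genuinely new contribution, which is shown to be $\lesssim Mf$. Carrying out this separation cleanly, together with the numerous routine $O(1)$ adjustments forced by passing among balls, cubes, their dilates, and the several dyadic systems, is the principal bookkeeping burden of the argument.
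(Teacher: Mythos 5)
Your outline assembles the right ingredients --- a sparse/stopping decomposition, splitting the far field into dyadic annuli to exploit the modulus of continuity, replacing the dilates $2^kP$ by dyadic cubes of comparable size in one of the $3^d$ shifted systems $\mathscr D^u$, and the weak $(1,1)$ bound as a black box --- and that is essentially the skeleton of the paper's argument. But you leave the key step as an acknowledged gap rather than a proof. In the paragraph ``The main obstacle'' you correctly observe that the oscillation of the nonlinear quantity $T_*f$ over a stopping cube $P$ is not controlled by the kernel-smoothness series alone, and you propose to ``separate the coarse truncations already accounted for at earlier stages of the recursion from the genuinely new contribution,'' but you do not carry this out; you explicitly defer it as ``the principal bookkeeping burden.'' That separation is precisely the nontrivial content of the theorem for maximal truncations, so what remains is a sketch with the crucial step missing.

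The paper resolves this obstacle by a different and much cleaner device, which your outline never reaches. First it replaces $T_*$ by the smoothly truncated $T_*^\phi$ (at the cost of a harmless $Mf$, a step also absent from your outline, where you instead discretize the truncation parameter by hand via the cubes $Q\ni x$ and the auxiliary operators $U_Q$). Then, to control the local oscillation $\omega_\lambda(T_*^\phi f,Q)$, it subtracts the specific \emph{constant} $c:=T_*^\phi\bigl(1_{(2Q)^c}f\bigr)(c_Q)$ and uses the elementary inequality
\begin{equation*}
\bigl|T_*^\phi f(x)-T_*^\phi(1_{(2Q)^c}f)(c_Q)\bigr|
\le \sup_{\epsilon>0}\bigl|T_\epsilon^\phi f(x)-T_\epsilon^\phi(1_{(2Q)^c}f)(c_Q)\bigr|,
\end{equation*}
i.e.\ the supremum commutes with the subtraction of the fixed reference function evaluated at the fixed point $c_Q$. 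This reduces the nonlinear oscillation to the linear one, after which the annular smoothness estimate and the weak $(1,1)$ bound for $T_*^\phi(1_{2Q}f)$ finish the lemma $\omega_\lambda(T_*^\phi f,Q)\lesssim\sum_k\omega(2^{-k})\fint_{2^kQ}|f|$. The sparse family is then produced in one stroke by Lerner's local mean oscillation formula rather than by a bespoke recursion, and the passage from $2^kQ$ to a $k$-fold dyadic ancestor $R^{(k)}$ in $\mathscr D^u$ with $\ell(R)=4\ell(Q)$ is carried out exactly as you indicate. In short: your Step~1 and Step~2 are a more laborious route to the same preliminary reductions, but the heart of the matter --- oscillation control for the maximal truncation --- is not proved in your proposal, and the device that makes it work (subtracting $T_*^\phi(1_{(2Q)^c}f)(c_Q)$ and passing the supremum inside) is absent.
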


The fact that (the maximal truncation of) a Calder\'on--Zygmund operator is dominated by the maximal operators and positive dyadic shifts was already observed by Hyt\"onen--Lacey \cite{HytLac} and elaborated by Lerner \cite{Lerner:posDyad}. However, their arguments used the random dyadic systems, and were derived from the (non-positive) dyadic representation theorem of \cite{Hytonen:A2}. Here we give a more efficient direct argument. The rest of this section is concerned with the proof of Theorem~\ref{thm:dyadicDomination}.

We first recall the well-known fact that
\begin{equation*}
  \abs{T_*f-T_*^\phi f}\lesssim Mf,
\end{equation*}
and hence it suffices to prove the result for $T_*^\phi$ in place of $T_*$.

We use Lerner's formula \cite{Lerner:formula}:
\begin{equation*}
  1_{Q_0}\abs{T_*^\phi f-m_{T_*^\phi f}(Q_0)}\lesssim M_{\lambda,Q_0}^{\#}(T_*^\phi f)+\sum_{Q\in\mathcal{L}}\omega_{\lambda}(T_*^\phi f,Q^{(1)})1_Q,
\end{equation*}
where, denoting by $g^*$ the non-increasing rearrangement of a measurable $g$,
\begin{equation*}
  \omega_\lambda(g,Q):=\inf_c(1_Q(g-c))^*(\lambda\abs{Q}),\qquad
  M_{\lambda,Q_0}^{\#}g:=\sup_{Q\subseteq Q_0} 1_Q\omega_\lambda(g,Q),
\end{equation*}
and $\mathcal{L}$ is a collection of dyadic subcubes of $Q_0$ with the property that 
\begin{equation}\label{eq:EQ}
  \abs{E(Q)}\geq\tfrac12\abs{Q},\qquad
  E(Q):=Q\setminus\bigcup_{\substack{Q'\in\mathcal{L}\\ Q'\subsetneq Q}}Q'.
\end{equation}
for all $Q\in\mathcal{L}$.

\begin{lemma}
\begin{equation*}
  \omega_\lambda(T_*^\phi f,Q)\lesssim\sum_{k=0}^{\infty}\omega(2^{-k})\fint_{2^k Q}\abs{f}.
\end{equation*}
\end{lemma}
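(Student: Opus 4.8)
The plan is to estimate the oscillation $\omega_\lambda(T_*^\phi f, Q)$ by choosing a convenient constant $c$ and showing that $T_*^\phi f$ is close to that constant on a large portion of $Q$. The natural choice is $c = T_*^\phi(1_{(3Q)^c}f)(x_0)$ for a fixed reference point $x_0\in Q$, or perhaps a median of this quantity; the point is that the contribution of the ``far'' part of $f$ to $T_*^\phi f$ varies slowly over $Q$ by the kernel regularity, while the contribution of the ``near'' part $1_{3Q}f$ is controlled in $L^{1,\infty}$ by the weak $(1,1)$ bound for $T_*^\phi$, hence is small on all but a $\lambda$-fraction of $Q$ after dividing by $|Q|$.

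Concretely, first I would split $f = 1_{3Q}f + 1_{(3Q)^c}f$ and use the sublinearity of the maximal truncation, $|T_*^\phi f(x) - c| \le T_*^\phi(1_{3Q}f)(x) + |T_*^\phi(1_{(3Q)^c}f)(x) - c|$. For the first term, the weak-type $(1,1)$ inequality for $T_*^\phi$ (a standard consequence of the Calder\'on--Zygmund theory, used here as a black box) gives
\begin{equation*}
  \big(1_Q T_*^\phi(1_{3Q}f)\big)^*(\lambda\abs{Q}) \lesssim \frac{1}{\lambda\abs{Q}}\int_{3Q}\abs{f} \lesssim \frac{1}{\lambda}\fint_{3Q}\abs{f},
\end{equation*}
which is dominated by the $k=0$ (or a few small $k$) term on the right-hand side since $\omega(1)$ is bounded below away from $0$ (or by absorbing the constant). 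For the second term, I would show that for all $x\in Q$ and all $\epsilon>0$,
\begin{equation*}
  \abs{T_\epsilon^\phi(1_{(3Q)^c}f)(x) - T_\epsilon^\phi(1_{(3Q)^c}f)(x_0)} \lesssim \sum_{k=0}^\infty \omega(2^{-k})\fint_{2^k Q}\abs{f},
\end{equation*}
by writing the difference as $\int (K_\epsilon^\phi(x,y) - K_\epsilon^\phi(x_0,y)) 1_{(3Q)^c}(y) f(y)\ud y$, decomposing the domain of integration into annuli $2^{k+1}Q\setminus 2^k Q$ (for $k\ge 1$, so that $|x-y|\gtrsim 2^k \ell(Q) \ge 2|x-x_0|$ and the kernel smoothness applies — recall $K_\epsilon^\phi$ is an $\omega$-CZ kernel uniformly in $\epsilon$), and bounding the kernel difference on the $k$-th annulus by $\omega(2^{-k})\cdot (2^k\ell(Q))^{-d}$. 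Taking a supremum over $\epsilon$ and then over the choice of truncation for $T_*^\phi f(x)$ versus the value at $x_0$ requires a little care — one should compare $T_{\epsilon(x)}^\phi(1_{(3Q)^c}f)(x)$ to $T_{\epsilon(x)}^\phi(1_{(3Q)^c}f)(x_0)$ with the \emph{same} $\epsilon(x)$, then note the latter is $\le T_*^\phi(1_{(3Q)^c}f)(x_0)$, so that $c := T_*^\phi(1_{(3Q)^c}f)(x_0)$ works as the comparison constant up to the stated error.

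Combining the two parts: on the exceptional set where $1_Q T_*^\phi(1_{3Q}f)$ exceeds its $\lambda|Q|$-rearrangement value, which has measure $\le \lambda|Q|$, we discard; on the complement, $|T_*^\phi f - c|$ is bounded by the sum over $k$ of $\omega(2^{-k})\fint_{2^k Q}|f|$, and therefore $\omega_\lambda(T_*^\phi f, Q) \le (1_Q(T_*^\phi f - c))^*(\lambda|Q|)$ is bounded by the same quantity. The main obstacle I anticipate is purely bookkeeping around the maximal truncation: ensuring that the same truncation parameter $\epsilon$ is used on both sides of the kernel-difference estimate, and handling the edge case $k=0$ (the annulus closest to $Q$, where the smoothness estimate may not apply and one instead uses the size bound together with the weak $(1,1)$ estimate) so that it gets folded into the first, $L^{1,\infty}$, part of the argument rather than the kernel-smoothness part. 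The summability of the resulting series is guaranteed precisely by the Dini condition \eqref{eq:Dini}, since $\sum_k \omega(2^{-k}) \approx \int_0^1 \omega(t)\,\frac{\ud t}{t} < \infty$, though the series bound in the lemma is stated before summation and so does not itself need Dini.
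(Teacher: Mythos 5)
Your proposal is correct and follows essentially the same route as the paper's proof: split $f$ into a near part and a far part, bound the oscillation of the far part over $Q$ via the kernel smoothness summed over dyadic annuli, choose the comparison constant $c$ to be the far-part value of $T_*^\phi$ at a fixed reference point in $Q$ (the paper uses the center $c_Q$, which is immaterial), and dispose of the near part using the weak-$(1,1)$ bound. The cosmetic differences ($3Q$ versus the paper's $2Q$; $x_0$ versus $c_Q$) and the careful handling of the maximal truncation via taking a common $\epsilon$ before the sup are exactly as in the paper.
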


\begin{proof}
For $x\in Q$, we have
\begin{equation*}
\begin{split}
  T_\epsilon^\phi f(x) &=T_\epsilon^\phi (1_{2Q}f)(x)+\Big(T_\epsilon^\phi (1_{(2Q)^c}f)(x)-T_\epsilon^\phi (1_{(2Q)^c}f)(c_Q)\Big) \\
    &\qquad+T_\epsilon^\phi (1_{(2Q)^c}f)(c_Q),
\end{split}
\end{equation*}
and
\begin{equation*}
\begin{split}
  &\abs{T_\epsilon^\phi (1_{(2Q)^c}f)(x)-T_\epsilon^\phi (1_{(2Q)^c}f)(c_Q)} \\
   &=\Babs{\sum_{k=1}^{\infty}\int_{2^{k+1}Q\setminus 2^k Q} [K_\epsilon^\phi (x,y)-K_\epsilon^\phi(c_Q,y)]f(y)\ud y} \\
  &\lesssim\sum_{k=1}^{\infty} \int_{2^{k+1}Q\setminus 2^k Q} \omega\Big(\frac{\abs{x-c_Q}}{\abs{x-y}}\Big)  \frac{1}{\abs{x-y}^d}\abs{f(y)}\ud y \\
  &\lesssim\sum_{k=1}^{\infty}  \omega(2^{-k}) \fint_{2^{k+1}Q}\abs{f}.
\end{split}
\end{equation*}
Hence
\begin{equation*}
\begin{split}
  \abs{T_*^\phi f(x)-T_*^\phi(1_{(2Q)^c}f)(c_Q)}
  &\leq\sup_{\epsilon>0}\abs{T_\epsilon^\phi f(x)-T_\epsilon^\phi (1_{(2Q)^c}f)(c_Q)} \\
  &\leq T_*^\phi(1_{2Q}f)(x)+\sum_{k=1}^{\infty}  \omega(2^{-k}) \fint_{2^{k+1}Q}\abs{f}
\end{split}
\end{equation*}
and thus
\begin{equation*}
\begin{split}
  \omega_\lambda(T_*^\phi f;Q)
  &=\inf_c((T_*^\phi f -c)1_Q)^*(\lambda\abs{Q})
  \leq((T_*^\phi f-T_*^\phi(1_{(2Q)^c}f))1_Q)^* (\lambda\abs{Q}) \\
  &\leq (T_*^\phi(1_{2Q}f))^*(\lambda\abs{Q})+\sum_{k=1}^{\infty}  \omega(2^{-k}) \fint_{2^{k+1}Q}\abs{f},
\end{split}
\end{equation*}
where finally
\begin{equation*}
  (T_*^\phi(1_{2Q}f))^*(\lambda\abs{Q})
  \leq\frac{1}{\lambda\abs{Q}}\Norm{T_*^\phi(1_{2Q}f)}{L^{1,\infty}}
  \lesssim\frac{1}{\abs{Q}}\Norm{1_{2Q}f}{L^1}\lesssim\fint_{2Q}\abs{f}
\end{equation*}
by the weak-type $(1,1)$ inequality for $T_*^\phi$.
\end{proof}

In particular
\begin{equation*}
  M_\lambda^{\#}(T_*^\phi f)
  =\sup_Q 1_Q\omega_\lambda(T_*^\phi f;Q)
  \lesssim\sup_Q 1_Q\sum_{k=0}^{\infty}\omega(2^{-k})\fint_{2^k Q}\abs{f}
  \lesssim Mf,
\end{equation*}
since the Dini condition \eqref{eq:Dini} gives
\begin{equation*}
  \sum_{k=0}^{\infty}\omega(2^{-k})\eqsim\int_0^1\omega(t)\frac{\ud t}{t}<\infty.
\end{equation*}

Next, we need to identify each
\begin{equation*}
  \sum_{Q\in\mathcal{L}} 1_Q\fint_{2^k Q}\abs{f}.
\end{equation*}
as a sum of $3^d$ dyadic shifts acting on $\abs{f}$.

The following lemma is well-known for $k=0$. The version below essentially repeats the same argument.

\begin{lemma}
For any cube $Q$, we can find a shifted dyadic cube $R\in\mathscr{D}^u$ for some $u\in\{0,\tfrac13,\tfrac23\}^d$ such that $Q\subset R$, $2^k Q\subset R^{(k)}$, and $3\cdot\ell(Q)<\ell(R)\leq 6\cdot\ell(Q)$.
\end{lemma}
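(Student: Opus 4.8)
The plan is to exhibit, for a fixed cube $Q$, a containing shifted dyadic cube of controlled size in one of the systems $\mathscr{D}^u$ with $u\in\{0,\tfrac13,\tfrac23\}^d$. First I would reduce to the one-dimensional problem coordinate by coordinate, since both the shifted dyadic grids and the side-length comparisons factor over coordinates: it suffices to find, for each interval $I=[a,a+\ell)$ with $\ell=\ell(Q)$, a shifted dyadic interval $J$ of the appropriate grid with $I\subset J$ and $3\ell<\ell(J)\le 6\ell$, and then take $R=\prod_j J_j$, choosing the shift parameter $u_j\in\{0,\tfrac13,\tfrac23\}$ separately in each coordinate.

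Next I would fix the dyadic scale: choose the integer $n$ with $3\ell< 2^{-n}\le 6\ell$ (equivalently $2^{-n-1}\le 3\ell<2^{-n}$), so that the candidate intervals $J$ all have length $2^{-n}$, and hence the containments $I\subset J$ force $2^k Q\subset R^{(k)}$ automatically once the base step $Q\subset R$ is in place (dilating by $2^k$ on both sides preserves inclusion, and $R^{(k)}$ is the $k$-fold dyadic parent). The remaining point is purely about covering: among the three translated grids $2^{-n}(\Z+(-1)^{?}u)$ for $u\in\{0,\tfrac13,\tfrac23\}$ — whose union, at scale $2^{-n}$, consists of intervals whose left endpoints form the set $2^{-n}(\tfrac13\Z)$ — any interval of length $\ell<\tfrac13\cdot 2^{-n}$ is contained in one of them. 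Indeed the grid points are spaced $\tfrac13\cdot 2^{-n}$ apart, so any point of $\R$ lies within distance $\tfrac13\cdot 2^{-n}$ to the left of some grid point $p$, and since $\ell< \tfrac13\cdot 2^{-n}\le 2^{-n}-\ell$ (using $3\ell<2^{-n}$), the interval $I$ fits inside $[p,p+2^{-n})$. A brief case check or a pigeonhole on the fractional part of $2^n a$ in the three subintervals $[0,\tfrac13),[\tfrac13,\tfrac23),[\tfrac23,1)$ pins down which $u$ works.

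One subtlety worth addressing explicitly is the sign $(-1)^j$ in the definition of $\mathscr{D}^u$: the relevant scale here is $j=n$ (where $2^{-j}=2^{-n}$), and whether $n$ is even or odd merely swaps the role of $u$ and $-u$; since $\{0,\tfrac13,\tfrac23\}$ is stable under $t\mapsto -t \pmod 1$ (with $-\tfrac13\equiv\tfrac23$), the same three translates are available either way, so this causes no loss. I would also note that the parents $R^{(k)}$ live in the \emph{same} system $\mathscr{D}^u$ — this is immediate from the definition of a dyadic system — so that the dyadic-shift interpretation in the following step goes through with a single choice of $u$ per cube $Q$. I expect the main (minor) obstacle to be bookkeeping the endpoint conventions so that the strict inequality $3\ell(Q)<\ell(R)$ and the closed/half-open nature of the cubes are consistent; the geometric content is elementary.
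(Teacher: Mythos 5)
You correctly reduce to one dimension, pick the right scale $3\ell(Q)<2^j\le 6\ell(Q)$, and observe that the union over $u\in\{0,\tfrac13,\tfrac23\}$ of grid endpoints at that scale is an arithmetic progression of gap $2^j/3>\ell(Q)$, so $Q$ fits in some grid interval. But there is a genuine gap at the step where you assert that $Q\subset R$ ``automatically'' forces $2^kQ\subset R^{(k)}$. That is false: $2^kQ$ denotes the \emph{concentric} dilate of $Q$, whereas $R^{(k)}$ is the $k$-fold \emph{dyadic ancestor} of $R$, and these two enlargements are not compatible. For a concrete one-dimensional failure, take $Q=[0,1)$ and $R=[0,4)$ sitting in the leftmost position inside $R^{(2)}=[0,16)$; then $4Q=[-1.5,2.5)\not\subset R^{(2)}$. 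Thus $Q\subset R$ and $2^kQ\subset R^{(k)}$ are two independent constraints, and both must be arranged for by the choice of shift $u$.

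The paper's proof handles this by running the pigeonhole \emph{twice} in each coordinate. Write $Q=\prod_i I_i$ and $2^kQ=\prod_i J_i$. At scale $2^j$, the interval $I_i$ (of length $<2^j/3$) contains at most one endpoint of the intervals $2^j([0,1)+m+(-1)^ju_i)$ over $u_i\in\{0,\tfrac13,\tfrac23\}$, which rules out at most one value $v_i$. Independently, at scale $2^{j+k}$ the interval $J_i$ (of length $2^k\ell(Q)<2^{j+k}/3$) contains at most one such endpoint, ruling out at most one further value $w_i$. Any $u_i\in\{0,\tfrac13,\tfrac23\}\setminus\{v_i,w_i\}$ then works for \emph{both} scales: $I_i$ lies inside some grid interval $K_i$ at scale $2^j$, $J_i$ lies inside some grid interval $L_i$ at scale $2^{j+k}$, and since $K_i$ and $L_i$ belong to the same dyadic grid, overlap (both contain $I_i$), and satisfy $|L_i|>|K_i|$, one concludes $L_i\supset K_i$ and hence $L_i=K_i^{(k)}$. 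Taking $R=\prod_i K_i$ finishes the argument. This double-pigeonhole (one constraint from $Q$, a second independent one from $2^kQ$) is the idea missing from your write-up; the remainder of your reduction, including the remark that the $(-1)^j$ sign merely permutes $\{0,\tfrac13,\tfrac23\}$ modulo $1$, is fine.
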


\begin{proof}
Let $Q=I_1\times\cdots\times I_d$ and $2^k Q=J_1\times\cdots\times J_d$. Choose $2^j$ so that $3\ell(Q)<2^j\leq 6\ell(Q)$, and consider the end-points of the dyadic intervals $2^j([0,1)+m_i+(-1)^j u_i)$, where $m_i\in\Z$ and $u_i\in\{0,\tfrac13,\tfrac23\}$. These form an arithmetic sequence with difference $\tfrac13 2^j>\ell(Q)$. Hence $I_i$ contains at most one such end-point, say with $u_i=v_i$. Similarly, $J_i$ contains at most one end-point of the intervals $2^{j+k}([0,1)+m_i+(-1)^j u_i)$, say with $u_i=w_i$. Consider a remaining value $u_i\in\{0,\tfrac13,\tfrac23\}\setminus\{v_i,w_i\}$. It follows that $I_i$ must be contained in some interval $K_i= 2^j([0,1)+m_i+(-1)^j u_i)$, and similarly $J_i$ must be contained in some interval $L_i = 2^{j+k}([0,1)+m_i'+(-1)^j u_i)$. Since $L_i\cap K_i\supset J_i\cap I_i =I_i$, we have $L_i\supset K_i$ and hence $L_i =K_i^{(k)}$ by the properties of dyadic intervals. Hence, the dyadic cube $R:=K_1\times\cdots\times K_d$ satisfies $R\supset Q$ and $R^{(k)}=L_1\times\cdots\times L_d\supset 2^k Q$, as well as $\ell(R)=\abs{K_i}\leq 6\ell(Q)$.
\end{proof}

Thus we can split $\mathcal{L}$ into $3^d$ subcollections $\mathcal{L}_u$, where each $Q\in\mathcal{L}_u$ has an associated $R=\rho_u(Q)\in\mathcal{D}^u$ with $Q\subset R$, $2^k Q\subset R^{(k)}$, and $3\ell(Q)<\ell(R)\leq 6\ell(Q)$. Since both side-lengths $\ell(Q)$ and $\ell(R)$ are powers of $2$, we have in fact $\ell(R)=4\ell(Q)$. Observe that a given $R\in\mathcal{D}^u$ has at most $4^d$ different preimages $Q\in\mathcal{L}$ under $\rho_u$, as their side-length is determined by $R$, they are all contained in $R$, and pairwise disjoint.

Hence, writing $\mathcal{L}_u^*:=\{\rho_u(Q):Q\in\mathcal{L}_u\}$, we have
\begin{equation*}
\begin{split}
  \sum_{Q\in\mathcal{L}}1_Q\fint_{2^k Q}\phi
  &\lesssim\sum_{u\in\{0,\tfrac13,\tfrac23\}^d}\sum_{Q\in\mathcal{L}_u}1_{\rho_u(Q)}\fint_{\rho_u(Q)^{(k)}}\phi \\
  &\lesssim\sum_{u\in\{0,\tfrac13,\tfrac23\}^d}\sum_{R\in\mathcal{L}_u^*}1_{R}\fint_{R^{(k)}}\phi 
  =:\sum_{u\in\{0,\tfrac13,\tfrac23\}^d}S^u_k\phi.
\end{split}
\end{equation*}

\begin{lemma}
For all $p\in(1,\infty)$, the operator $S^u_k$ satisfies the unweighted estimate
\begin{equation*}
  \Norm{S^u_k\phi}{p}\lesssim\Norm{\phi}{p};
\end{equation*}
in particular, it is a bounded positive dyadic shift.
\end{lemma}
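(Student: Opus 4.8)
The plan is to deduce the estimate from a single ingredient, the $\tfrac12$-sparseness of $\mathcal{L}$ expressed by \eqref{eq:EQ}, together with the $L^p$-boundedness of the Hardy--Littlewood maximal operator $M$; the complexity $k$ will not enter the final constant. First I would transfer sparseness to $\mathcal{L}_u^*$: for $R\in\mathcal{L}_u^*$ put $E_R:=\bigcup_{Q\in\mathcal{L}_u,\ \rho_u(Q)=R}E(Q)$. Since the sets $E(Q)$, $Q\in\mathcal{L}$, are pairwise disjoint and the preimage sets $\rho_u^{-1}(R)$ are disjoint for distinct $R$, the sets $E_R\subseteq R$ are pairwise disjoint; and since every $R\in\mathcal{L}_u^*$ has at least one preimage $Q$ with $\abs{Q}=4^{-d}\abs{R}$, \eqref{eq:EQ} gives $\abs{E_R}\geq\tfrac12\cdot 4^{-d}\abs{R}=:\eta\abs{R}$ with $\eta=\eta(d)>0$. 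One may also assume $\phi\geq0$, because $\abs{S^u_k\phi}\leq S^u_k\abs{\phi}$ pointwise.

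Since $S^u_k\phi\geq0$, duality for $L^p$ (with $p'=p/(p-1)$) gives $\Norm{S^u_k\phi}{p}=\sup\{\sum_{R\in\mathcal{L}_u^*}(\fint_{R^{(k)}}\phi)\int_R g:\ g\geq0,\ \Norm{g}{p'}\leq1\}$, all terms being nonnegative. Fix such a $g$. For $R\in\mathcal{L}_u^*$ we have $E_R\subseteq R\subseteq R^{(k)}$, so every point of $E_R$ lies in $R^{(k)}$ and hence $\fint_{R^{(k)}}\phi\leq\inf_{E_R}M\phi$; likewise $\fint_R g\leq Mg(y)$ for every $y\in E_R\subseteq R$, whence $\int_R g=\abs{R}\fint_R g\leq\eta^{-1}\abs{E_R}\fint_R g\leq\eta^{-1}\int_{E_R}Mg$. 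Multiplying these, summing over the pairwise disjoint sets $E_R$, and applying Hölder's inequality,
\[
  \sum_{R\in\mathcal{L}_u^*}\Big(\fint_{R^{(k)}}\phi\Big)\int_R g
  \leq\eta^{-1}\sum_{R\in\mathcal{L}_u^*}\int_{E_R}(M\phi)(Mg)
  \leq\eta^{-1}\int_{\R^d}(M\phi)(Mg)
  \leq\eta^{-1}\Norm{M\phi}{p}\Norm{Mg}{p'}.
\]
By the maximal theorem and $\Norm{g}{p'}\leq1$ this is $\lesssim_{p,d}\eta^{-1}\Norm{\phi}{p}$, so $\Norm{S^u_k\phi}{p}\lesssim\Norm{\phi}{p}$ with a constant depending only on $p$ and $d$; in particular it is independent of $k$, of $Q_0$ and of $f$. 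Finally, by construction $S^u_k\phi=\sum_{R\in\mathcal{L}_u^*}1_R\fint_{R^{(k)}}\phi$ with $\mathcal{L}_u^*\subseteq\mathscr{D}^u$, which is exactly the form of a positive dyadic shift of complexity $k$ with respect to $\mathscr{D}^u$; this proves the last assertion.

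I do not foresee a genuine difficulty. The one point requiring care is to keep the constant uniform in $k$: estimating $\fint_{R^{(k)}}\phi$ by $M\phi$ at a point of $E_R\subseteq R\subseteq R^{(k)}$ is insensitive to how far $R^{(k)}$ sits above $R$, whereas a more naive route --- e.g.\ applying a Carleson embedding theorem to the enlarged family $\{R^{(k)}:R\in\mathcal{L}_u^*\}$, which is Carleson only with constant $\sim 2^{kd}$ --- would introduce a spurious factor growing with $k$. (Such a weaker bound would still suffice for the literal statement, but the uniform version is what makes Theorem~\ref{thm:dyadicDomination} useful once combined with the Dini sum $\sum_k\omega(2^{-k})$.)
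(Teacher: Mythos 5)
Your argument is correct and is essentially the paper's proof: both dualize, bound $\fint_{R^{(k)}}\phi$ and $\fint_R\psi$ by $M\phi$ and $M\psi$ evaluated on $R\subset R^{(k)}$, use the disjointness of the sets $E(Q)$ (inherited from the sparseness condition~\eqref{eq:EQ}) to dominate the resulting sum by $\int M\phi\cdot M\psi$, and conclude by H\"older and the maximal theorem. The only cosmetic difference is bookkeeping: you aggregate the $E(Q)$ over all preimages of a fixed $R$ into a set $E_R$ and keep only one preimage for the size bound, whereas the paper passes from the sum over $R\in\mathcal{L}_u^*$ to a (possibly larger) sum over $Q\in\mathcal{L}_u$ and uses $|\rho_u(Q)|\lesssim|E(Q)|$; both devices handle the finite multiplicity of $\rho_u$ and give the same $k$-independent constant, which, as you correctly emphasize, is the point needed for the Dini summation in Theorem~\ref{thm:dyadicDomination}.
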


\begin{proof}
Let $\phi\in L^p$ and $\psi\in L^{p'}$. Then
\begin{equation*}
\begin{split}
  \pair{\psi}{S^u_k\phi}
  &=\sum_{R\in\mathcal{L}_u^*}\int_R\psi\cdot\fint_{R^{(k)}}\phi
  =\sum_{R\in\mathcal{L}_u^*}\abs{R}\cdot\fint_R\psi\cdot\fint_{R^{(k)}}\phi \\
  &\leq\sum_{R\in\mathcal{L}_u^*}\abs{R}\cdot\inf_R M\psi\cdot\inf_{R}M\phi \\
  &=\sum_{Q\in\mathcal{L}_u}\abs{\psi_u(Q)}\cdot\inf_{ \rho_u(Q)} M\psi\cdot\inf_{ \rho_u(Q)}M\phi   \\
  &\lesssim\sum_{Q\in\mathcal{L}_u}\abs{E(Q)}\cdot\inf_Q M\psi\cdot\inf_Q M\phi
    \leq\sum_{Q\in\mathcal{L}_u}\int_{E(Q)}M\psi\cdot M\phi \\
   &\leq\int M\psi\cdot M\phi\leq\Norm{M\psi}{p'}\Norm{M\phi}{p}
   \lesssim\Norm{\psi}{p'}\Norm{\phi}{p},
\end{split}
\end{equation*}
where we used the pairwise disjointness of the sets $E(Q)$, $Q\in\mathcal{L}$, and the boundedness of the maximal operator.
\end{proof}

Up to this point, we have seen the following:
\begin{equation*}
\begin{split}
  1_{Q_0}\abs{T_*^\phi f-m_{T_*^\phi f}(Q_0)}
  &\lesssim M^{\#}_{\lambda,Q_0}(T_*^\phi f)+\sum_{Q\in\mathcal{L}}\omega_\lambda(T_*^\phi f;Q^{(1)})1_Q \\
  &\lesssim Mf+\sum_{Q\in\mathcal{L}}\sum_{k=0}^{\infty}\omega(2^{-k})\fint_{2^k Q}\abs{f}\cdot 1_Q \\
  &\lesssim Mf+\sum_{u\in\{0,\tfrac13,\tfrac23\}^d}\sum_{k=0}^{\infty}\omega(2^{-k})S^u_k\abs{f}.
\end{split}
\end{equation*}
It only remains to observe that the median $m_{T_*^\phi f}(Q_0)$ satisfies
\begin{equation*}
  \abs{m_{T_*^\phi f}(Q_0)}\lesssim\frac{1}{\abs{Q_0}}\Norm{T_*^\phi f}{L^{1,\infty}}
  \lesssim\frac{1}{\abs{Q_0}}\Norm{f}{L^1}
  =\fint_{Q_0}\abs{f}\leq\inf_{Q_0}Mf
\end{equation*}
provided that $\supp f\subseteq Q_0$. This completes the proof of Theorem~\ref{thm:dyadicDomination}.


\section{The dyadic domination theorem for variation operators}

As before, let $T_\epsilon^\phi $ and $K_\epsilon^\phi $ be the smooth truncations of the $\omega$-Calder\'on--Zygmund operator $T$ and its kernel $K$. We denote by $\vec{T}^\phi:=\{T_\epsilon^\phi \}_{\epsilon>0}$ and $\vec{K}^\phi:=\{K_\epsilon^\phi \}_{\epsilon>0}$ the corresponding vectorial operator and kernel involving all $\epsilon>0$ simultaneously. Then the previously considered maximal truncation is simply $T_*^\phi f(x)=\Norm{\vec{T}^\phi f(x)}{L^\infty}$. In place of the supremum norm, we now consider the stronger variational norms
\begin{equation*}
   \Norm{\vec{y}}{V^q}=\Norm{\{y_{\epsilon}\}_{\epsilon>0}}{V^q} :=\sup_{\{\epsilon_i\}_{i\in\Z}}\Big(\sum_{i=-\infty}^{\infty}\abs{y_{\epsilon_i}-y_{\epsilon_{i+1}}}^q\Big)^{1/q},
\end{equation*}
where the supremum is over all increasing sequences $\{\epsilon_i\}_{i\in\Z}$ of positive numbers. We define the smooth $q$-variation operator of $T$ as
\begin{equation*}
  V_q^\phi Tf(x):=\Norm{\vec{T}^\phi f(x)}{V^q}.
\end{equation*}

For certain classes of Calder\'on--Zygmund operators, norm inequalities for the non-linear operator $V_q^\phi T$, for $q>2$, are known in the literature; see Remark~\ref{rem:variation} for details. In the following result, we take one such estimate as a black-box assumption.

\begin{theorem}\label{thm:dyadicDominationVar}
Let $T$ be an $\omega$-Calder\'on--Zygmund operator, where the modulus of continuity $\omega$ satisfies the Dini condition~\eqref{eq:Dini}, and the smooth $q$-variation of $T$ satisfies the weak $(1,1)$ estimate:
\begin{equation}\label{eq:VqTweak11}
  \Norm{V_q^\phi Tf}{L^{1,\infty}}\lesssim\Norm{f}{L^1}.
\end{equation}
Let $Q_0$ be a cube which contains the support of $f$.
Then we have the pointwise bound
\begin{equation*}
    1_{Q_0}V_q^\phi Tf
    \lesssim Mf+\sum_{u\in\{0,\tfrac12,\tfrac13\}^d}\sum_{k=0}^{\infty}\omega(2^{-k})S^u_k\abs{f},
\end{equation*}
where $S^u_k$ is a positive dyadic shift of complexity $k$ with respect to the dyadic system
\begin{equation*}
  \mathscr{D}^u:=\{2^{-j}([0,1)^d+m+(-1)^j u):j\in\Z,m\in\Z^d\}.
\end{equation*}
\end{theorem}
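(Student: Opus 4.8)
The plan is to mimic the proof of Theorem~\ref{thm:dyadicDomination} line by line, replacing the maximal truncation $T_*^\phi$ by the $q$-variation operator $V_q^\phi T$ throughout. The only genuinely new ingredient is the analogue of the first lemma: we need the oscillation estimate
\begin{equation*}
  \omega_\lambda(V_q^\phi Tf,Q)\lesssim\sum_{k=0}^{\infty}\omega(2^{-k})\fint_{2^k Q}\abs{f}.
\end{equation*}
Once this is in hand, the rest of the argument is \emph{verbatim} the same: Lerner's formula applied to $g=V_q^\phi Tf$ gives $1_{Q_0}\abs{V_q^\phi Tf-m_{V_q^\phi Tf}(Q_0)}\lesssim M_{\lambda,Q_0}^{\#}(V_q^\phi Tf)+\sum_{Q\in\mathcal L}\omega_\lambda(V_q^\phi Tf,Q^{(1)})1_Q$; the Dini condition converts the sharp maximal term into $Mf$; the two decomposition lemmas (which concern only cubes, not the operator) reorganize $\sum_{Q\in\mathcal L}1_Q\fint_{2^kQ}\abs f$ into $\sum_u S^u_k\abs f$; and the median is controlled by $\fint_{Q_0}\abs f\le\inf_{Q_0}Mf$ using the weak $(1,1)$ bound~\eqref{eq:VqTweak11} exactly as before. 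So I would state explicitly that those steps carry over and concentrate the write-up on the new lemma.

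For the oscillation estimate, I would split $f=1_{2Q}f+1_{(2Q)^c}f$ and, for $x\in Q$, write
\begin{equation*}
  \vec T^\phi f(x)=\vec T^\phi(1_{2Q}f)(x)+\bigl(\vec T^\phi(1_{(2Q)^c}f)(x)-\vec T^\phi(1_{(2Q)^c}f)(c_Q)\bigr)+\vec T^\phi(1_{(2Q)^c}f)(c_Q).
\end{equation*}
The third term is a constant (independent of $x$), so it drops out of $\omega_\lambda$ after the infimum over $c$. The key point is that $\Norm{\cdot}{V^q}$ is a genuine norm on sequences, so it obeys the triangle inequality: $\Norm{\vec T^\phi f(x)-\vec a}{V^q}\le\Norm{\vec T^\phi(1_{2Q}f)(x)}{V^q}+\Norm{\vec T^\phi(1_{(2Q)^c}f)(x)-\vec T^\phi(1_{(2Q)^c}f)(c_Q)}{V^q}$, where $\vec a:=\vec T^\phi(1_{(2Q)^c}f)(c_Q)$. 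The first summand is $V_q^\phi T(1_{2Q}f)(x)$, whose $(T_*^\phi$-style) rearrangement estimate $(V_q^\phi T(1_{2Q}f))^*(\lambda\abs Q)\lesssim\fint_{2Q}\abs f$ follows from the weak $(1,1)$ hypothesis~\eqref{eq:VqTweak11} exactly as in the previous section. The second summand is estimated pointwise, \emph{uniformly in every admissible sequence} $\{\epsilon_i\}$: for each $i$,
\begin{equation*}
  \abs{T_{\epsilon_i}^\phi(1_{(2Q)^c}f)(x)-T_{\epsilon_i}^\phi(1_{(2Q)^c}f)(c_Q)}
   \le\sum_{k=1}^{\infty}\int_{2^{k+1}Q\setminus 2^kQ}\abs{K_{\epsilon_i}^\phi(x,y)-K_{\epsilon_i}^\phi(c_Q,y)}\abs{f(y)}\ud y
   \lesssim\sum_{k=1}^{\infty}\omega(2^{-k})\fint_{2^{k+1}Q}\abs f,
\end{equation*}
using that each $K_\epsilon^\phi$ is an $\omega$-Calder\'on--Zygmund kernel uniformly in $\epsilon$; since this bound is independent of $i$, the $V^q$-norm of the sequence of differences is controlled by (twice) the $\ell^\infty$ bound times a harmless constant only when the sequence is eventually constant on both ends—more carefully, one observes that as $\epsilon\to 0$ and $\epsilon\to\infty$ the truncations $T_\epsilon^\phi(1_{(2Q)^c}f)(x)$ converge, so the sum defining $\Norm{\cdot}{V^q}$ is really over a sequence that is Cauchy at both ends with each term bounded, giving $\Norm{\vec T^\phi(1_{(2Q)^c}f)(x)-\vec T^\phi(1_{(2Q)^c}f)(c_Q)}{V^q}\lesssim\sum_{k=1}^\infty\omega(2^{-k})\fint_{2^{k+1}Q}\abs f$. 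Combining the two summands yields the lemma.

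The main obstacle I anticipate is precisely this last point: unlike the supremum, the $V^q$-norm of a sequence is not automatically dominated by its $\ell^\infty$-norm, so the pointwise kernel bound alone does not immediately control $\Norm{\vec T^\phi(1_{(2Q)^c}f)(x)-\vec a}{V^q}$. The resolution is to note that the map $\epsilon\mapsto T_\epsilon^\phi(1_{(2Q)^c}f)(x)$ is itself a difference of smooth truncations of an $\omega$-Calder\'on--Zygmund operator acting on $1_{(2Q)^c}f$, hence one can either (i) invoke that $x\mapsto T_\epsilon^\phi g(x)-T_\epsilon^\phi g(c_Q)$ differs from $(T^\phi g(x)-T^\phi g(c_Q))$ — an $\epsilon$-free quantity — by an error whose $V^q$-norm in $\epsilon$ is itself controlled by a truncated maximal/variation quantity applied to the local part $1_{2^kQ\setminus 2^{k-1}Q}f$, summed over annuli with the $\omega(2^{-k})$ gain; or, more cleanly, (ii) write the difference as a telescoping sum over dyadic annuli and apply the triangle inequality for $V^q$ annulus by annulus, so that on the $k$-th annulus the relevant bound is the \emph{uniform-in-$\epsilon$} kernel-difference bound $\omega(2^{-k})\fint_{2^{k+1}Q}\abs f$ — and for a sequence all of whose increments on a fixed interval of $\epsilon$'s are bounded by a constant $c_k$, while outside that interval the contribution is already negligible, the $V^q$-norm is $\lesssim c_k$. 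I would carry out option (ii), isolating a short sublemma: \emph{if $\vec g=\{g_\epsilon\}$ satisfies $\abs{g_\epsilon}\le A$ for all $\epsilon$ and $g_\epsilon$ is monotone in $\epsilon$ (which holds here because the truncation $\phi(\cdot/\epsilon)$ is monotone in $\epsilon$ and $f\ge 0$ after passing to $\abs f$), then $\Norm{\vec g}{V^q}\le\Norm{\vec g}{V^1}\le 2A$} — though one must be slightly careful since $T_\epsilon^\phi(1_{(2Q)^c}f)(x)-T_\epsilon^\phi(1_{(2Q)^c}f)(c_Q)$ need not be monotone. The safest route, and the one I would ultimately present, is to observe that the estimate we actually need is for $\omega_\lambda$, which only sees a single subtracted constant, so it in fact suffices to bound $(\,1_Q(V_q^\phi Tf - V_q^\phi T(1_{(2Q)^c}f)(c_Q))\,)^*(\lambda\abs Q)$, and here one uses the pointwise \emph{reverse} triangle inequality $\abs{\Norm{\vec T^\phi f(x)}{V^q}-\Norm{\vec a}{V^q}}\le\Norm{\vec T^\phi f(x)-\vec a}{V^q}$ only after splitting off $1_{2Q}f$ — so the residual $V^q$-norm of the tail is of a difference sequence whose every coordinate is bounded by $\sum_k\omega(2^{-k})\fint_{2^{k+1}Q}\abs f$, and, crucially, whose \emph{total variation} in $\epsilon$ is also bounded by the same quantity because each kernel difference $K_\epsilon^\phi(x,y)-K_\epsilon^\phi(c_Q,y)$ has $\epsilon$-variation (in the Calder\'on--Zygmund sense) controlled by the same $\omega$-bound; summing this over annuli closes the estimate. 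This is the one place that genuinely uses more than the black-box weak $(1,1)$ bound, namely the uniform-in-$\epsilon$ kernel-regularity of $K_\epsilon^\phi$ noted at the start of Section~2.
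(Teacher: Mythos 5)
Your proposal correctly lands on the paper's strategy: the only genuinely new ingredient is the oscillation lemma, and since the $V^q$-norm is not dominated by the $\ell^\infty$-norm, the off-diagonal kernel difference $\vec K^\phi(x,y)-\vec K^\phi(c_Q,y)$ must be controlled in \emph{total variation in $\epsilon$}, via $\Norm{\vec u}{V^q}\le\Norm{\vec u}{V^1}\le\int_0^\infty\abs{\partial_\epsilon u_\epsilon}\ud\epsilon$. However, two things need fixing. The intermediate claim that the $V^q$-norm is controlled because each term is uniformly bounded and the sequence converges as $\epsilon\to0$ and $\epsilon\to\infty$ is simply false: a uniformly bounded family converging at both ends can still oscillate arbitrarily fast on a compact $\epsilon$-interval, giving infinite $q$-variation (the supremum in the definition ranges over \emph{all} increasing sequences). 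You flag this yourself as "the main obstacle," but the passage still reads as if the bound were available; it should be deleted.

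The genuine gap is that the central new estimate,
\begin{equation*}
  \int_0^\infty\abs{\partial_\epsilon[K_\epsilon^\phi(x,y)-K_\epsilon^\phi(c_Q,y)]}\ud\epsilon
  \lesssim\frac{\omega(2^{-k})}{\abs{2^kQ}}
  \qquad(x\in Q,\ y\in2^{k+1}Q\setminus2^kQ),
\end{equation*}
is asserted ("each kernel difference has $\epsilon$-variation controlled by the same $\omega$-bound") but never proved, and this is exactly the one step where the proof departs from that of Theorem~\ref{thm:dyadicDomination}. Proving it uses the \emph{smoothness} of $\phi$ in an essential way: differentiating $\phi((x-y)/\epsilon)K(x,y)$ in $\epsilon$ produces three terms, controlled respectively by the size bound on $K$ together with the factor $\abs{(y-x)/\epsilon^2-(y-c_Q)/\epsilon^2}$ (a gain of order $\abs{x-c_Q}/\abs{x-y}\lesssim2^{-k}\lesssim\omega(2^{-k})$), by the Lipschitz bound on $\nabla\phi$ which gives $\abs{\nabla\phi(\tfrac{x-y}{\epsilon})-\nabla\phi(\tfrac{c_Q-y}{\epsilon})}\lesssim\tfrac{\abs{x-c_Q}}{\epsilon}$, and by the modulus-of-continuity bound on $K$ itself; the $\epsilon$-integration is finite only because $\nabla\phi((x-y)/\epsilon)$ is supported where $\epsilon\in[\abs{x-y},2\abs{x-y}]$. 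Without carrying out this computation the lemma is not established, even though the rest of your outline (Lerner's formula, the median bound via~\eqref{eq:VqTweak11}, the dyadic reorganization into the shifts $S^u_k$) does carry over verbatim from Section~2 exactly as you say.
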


\begin{remark}\label{rem:variation}
The weak-type bound for the sharp $q$-variation of $T$,
\begin{equation}\label{eq:VqTsharp}
  \Norm{V_q Tf}{L^{1,\infty}}\lesssim\Norm{f}{L^1},\qquad V_q Tf(x):=V_q^\chi Tf(x):=\Norm{\{T_\epsilon f(x)\}_{\epsilon>0}}{V^q},
\end{equation}
is known to hold at least for all Calder\'on--Zygmund convolution kernels under the following set of condition (see Campbell et al. \cite[Corollary 1.4]{CJRW:higher}):
\begin{equation*}
\begin{split}
  K(x,y)=k(x-y)=\frac{\Omega(x-y)}{\abs{x-y}^d},\qquad
  \Omega(x)=\Omega(\frac{x}{\abs{x}}),\qquad
  \Omega\in L^\infty(\mathbb{S}^{d-1}),\\
   \int_{\mathbb{S}^{d-1}}\Omega\ud\sigma=0,\qquad
  \int_{\abs{x}>2\abs{y}}\abs{k(x-y)-k(x)}\ud x\leq C\quad\forall y\in\R^d,
\end{split}
\end{equation*}
where the last-mentioned H\"ormander condition is a consequence of the Dini condition that we have imposed in any case. A particular case is the classical Hilbert transform \cite{CJRW:Hilbert}. The estimate \eqref{eq:VqTsharp} implies \eqref{eq:VqTweak11} when $\phi$ is a smooth radial cut-off.
\end{remark}

\begin{proof}[Proof of the last remark]
We identify $\phi(x)=\phi(\abs{x})$ as a function on $\R_+$. Then
\begin{equation*}
\begin{split}
  T_\epsilon^\phi f(x)
  &=\int\phi\Big(\frac{x-y}{\epsilon}\Big)K(x,y)f(y)\ud y
  =\int\int_0^{\abs{x-y}/\epsilon}\phi'(t)\ud t K(x,y)f(y)\ud y \\
  &=\int_0^{\infty}\phi'(t)\int_{\abs{x-y}>\epsilon t}K(x,y)f(y)\ud y\ud t
  =\int_0^{\infty}\phi'(t)T_{\epsilon t}f(x)\ud t.
\end{split}
\end{equation*}
Since $\Norm{\{y_{\epsilon t}\}_{\epsilon>0}}{V^q}=\Norm{\{y_{\epsilon}\}_{\epsilon>0}}{V_q}$, it follows that
\begin{equation*}
  V_q^\phi Tf(x)=\Norm{\vec{T}^\phi f(x)}{V_q}
  \leq\int_0^\infty\abs{\phi'(t)}\Norm{\vec{T}f(x)}{V_q}\ud t\lesssim V_q Tf(x).\qedhere
\end{equation*}
\end{proof}

We turn to the proof of Theorem~\ref{thm:dyadicDominationVar} and use again Lerner's formula \cite{Lerner:formula}:
\begin{equation*}
  1_{Q_0}\abs{V_q^\phi Tf-m_{V_q^\phi Tf}(Q_0)}\lesssim M_{\lambda,Q_0}^{\#}(V_q^\phi Tf)+\sum_{Q\in\mathcal{L}}\omega_{\lambda}(V_q^\phi Tf,Q^{(1)})1_Q,
\end{equation*}
where
\begin{equation*}
  \abs{m_{V_q^\phi Tf}(Q_0)}
  \lesssim\frac{1}{\abs{Q_0}}\Norm{V_q^\phi  Tf}{L^{1,\infty}}
  \lesssim\frac{1}{\abs{Q_0}}\Norm{f}{L^{1}}=\fint_{Q_0}\abs{f}\leq\inf_{Q_0} Mf,
\end{equation*}
using $V_q^\phi T:L^1\to L^{1,\infty}$ and $\supp f\subset Q_0$. Once we prove the following lemma, the rest of the proof proceeds exactly as the proof of Theorem~\ref{thm:dyadicDomination}:

\begin{lemma}
\begin{equation*}
  \omega_\lambda(V_q^\phi Tf,Q)\lesssim\sum_{k=0}^{\infty}\omega(2^{-k})\fint_{2^k Q}\abs{f}.
\end{equation*}
\end{lemma}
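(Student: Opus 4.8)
The plan is to follow the same template as the proof of the corresponding lemma for $T_*^\phi$ in Section~2, replacing the supremum norm $\Norm{\cdot}{L^\infty}$ by the variational norm $\Norm{\cdot}{V^q}$ throughout, and invoking the quasi-triangle inequality of $V^q$ wherever the triangle inequality of $L^\infty$ was used before. First I would fix a cube $Q$ with centre $c_Q$ and, for $x\in Q$, split
\begin{equation*}
  \vec{T}^\phi f(x)=\vec{T}^\phi(1_{2Q}f)(x)+\big(\vec{T}^\phi(1_{(2Q)^c}f)(x)-\vec{T}^\phi(1_{(2Q)^c}f)(c_Q)\big)+\vec{T}^\phi(1_{(2Q)^c}f)(c_Q),
\end{equation*}
understood componentwise in $\epsilon>0$. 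The last term is a constant (independent of $x$), so its $V^q$-norm contributes $0$ to $\omega_\lambda(V_q^\phi Tf,Q)$ after subtracting this constant inside the infimum over $c$.

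The heart of the matter is to estimate the $V^q$-norm of the middle term uniformly for $x\in Q$. Here I would \emph{not} be able to simply pull the variation inside an integral over annuli, because $\Norm{\cdot}{V^q}$ is a genuinely nonlinear functional; instead I would argue as follows. For any increasing sequence $\{\epsilon_i\}$, the difference
\begin{equation*}
  \big[T_{\epsilon_i}^\phi(1_{(2Q)^c}f)(x)-T_{\epsilon_i}^\phi(1_{(2Q)^c}f)(c_Q)\big]-\big[T_{\epsilon_{i+1}}^\phi(1_{(2Q)^c}f)(x)-T_{\epsilon_{i+1}}^\phi(1_{(2Q)^c}f)(c_Q)\big]
\end{equation*}
equals $\int\big[(K_{\epsilon_i}^\phi-K_{\epsilon_{i+1}}^\phi)(x,y)-(K_{\epsilon_i}^\phi-K_{\epsilon_{i+1}}^\phi)(c_Q,y)\big]1_{(2Q)^c}(y)f(y)\ud y$. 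The key point is that, for fixed $x,y$, the map $\epsilon\mapsto K_\epsilon^\phi(x,y)=\phi((x-y)/\epsilon)K(x,y)$ is monotone in $\epsilon$ (since $\phi$ is monotone along rays), so the function $\epsilon\mapsto\phi((x-y)/\epsilon)$ has total variation exactly $1$ over $\epsilon\in(0,\infty)$; combined with the smoothness estimate
\begin{equation*}
  \abs{K(x,y)-K(c_Q,y)}\lesssim\omega\Big(\frac{\abs{x-c_Q}}{\abs{x-y}}\Big)\frac{1}{\abs{x-y}^d}\lesssim\omega(2^{-k})\frac{1}{\abs{2^kQ}}\qquad\text{for }y\in 2^{k+1}Q\setminus 2^kQ,
\end{equation*}
this lets me bound $\sum_i\abs{\cdots}$ (with $q\geq1$, hence $\ell^q\subseteq\ell^1$) by $\sum_{k=1}^\infty\omega(2^{-k})\fint_{2^{k+1}Q}\abs{f}$, which is the desired tail term. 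Taking the supremum over $\{\epsilon_i\}$ then gives $\Norm{\vec{T}^\phi(1_{(2Q)^c}f)(x)-\vec{T}^\phi(1_{(2Q)^c}f)(c_Q)}{V^q}\lesssim\sum_{k=1}^\infty\omega(2^{-k})\fint_{2^{k+1}Q}\abs{f}$ for every $x\in Q$.

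Finally, by the quasi-triangle inequality for $V^q$ (valid with constant comparable to $1$ for $q\ge 1$),
\begin{equation*}
  \big|\,V_q^\phi Tf(x)-\Norm{\vec{T}^\phi(1_{(2Q)^c}f)(c_Q)}{V^q}\,\big|\lesssim\Norm{\vec{T}^\phi(1_{2Q}f)(x)}{V^q}+\sum_{k=1}^\infty\omega(2^{-k})\fint_{2^{k+1}Q}\abs{f}=V_q^\phi T(1_{2Q}f)(x)+\cdots,
\end{equation*}
so, writing $c:=\Norm{\vec{T}^\phi(1_{(2Q)^c}f)(c_Q)}{V^q}$ (a genuine constant on $Q$, though not the median),
\begin{equation*}
  \omega_\lambda(V_q^\phi Tf,Q)\le\big((V_q^\phi Tf-c)1_Q\big)^*(\lambda\abs{Q})\le(V_q^\phi T(1_{2Q}f))^*(\lambda\abs{Q})+\sum_{k=1}^\infty\omega(2^{-k})\fint_{2^{k+1}Q}\abs{f}.
\end{equation*}
The first term is controlled by the weak-$(1,1)$ hypothesis \eqref{eq:VqTweak11} exactly as before: $(V_q^\phi T(1_{2Q}f))^*(\lambda\abs{Q})\le(\lambda\abs{Q})^{-1}\Norm{V_q^\phi T(1_{2Q}f)}{L^{1,\infty}}\lesssim\abs{Q}^{-1}\Norm{1_{2Q}f}{L^1}\lesssim\fint_{2Q}\abs{f}$, and the $k=0$ term of the claimed sum absorbs this. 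I expect the main obstacle to be the careful bookkeeping in the middle term: verifying that the monotonicity of $\phi$ along rays really does make $\epsilon\mapsto K_\epsilon^\phi(x,y)$ of uniformly bounded variation, so that the variational sum over $\{\epsilon_i\}$ collapses to a single kernel-difference estimate rather than producing an extra factor; once that is in hand, everything else is a routine repetition of the argument for $T_*^\phi$. (If one prefers to avoid monotonicity of $\phi$, the same conclusion follows from the representation $T_\epsilon^\phi f=\int_0^\infty\phi'(t)T_{\epsilon t}f\ud t$ established in the proof of the Remark, together with $\int_0^\infty\abs{\phi'(t)}\ud t\lesssim1$ and Minkowski's inequality for $V^q$.)
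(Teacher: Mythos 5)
Your overall scaffolding is correct and matches the paper: decompose $\vec{T}^\phi f(x)$ at $x\in Q$ into a local piece, a tail difference, and a constant tail $\vec{T}^\phi(1_{(2Q)^c}f)(c_Q)$; bound the $V^q$-norm of the tail difference pointwise on $Q$; take $c=\Norm{\vec{T}^\phi(1_{(2Q)^c}f)(c_Q)}{V^q}$ in the infimum defining $\omega_\lambda$; and absorb the local piece into the $k=0$ term via the weak-$(1,1)$ hypothesis~\eqref{eq:VqTweak11}. The gap is in the middle term.

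You assert that the variation of the tail difference ``collapses to a single kernel-difference estimate'' once one knows that $\epsilon\mapsto\phi((x-y)/\epsilon)$ has total variation $1$. That is not true, because the expression
\begin{equation*}
K_\epsilon^\phi(x,y)-K_\epsilon^\phi(c_Q,y)=\phi\Big(\frac{x-y}{\epsilon}\Big)K(x,y)-\phi\Big(\frac{c_Q-y}{\epsilon}\Big)K(c_Q,y)
\end{equation*}
carries \emph{two different} cutoffs. Splitting as
\begin{equation*}
\phi\Big(\frac{x-y}{\epsilon}\Big)\big[K(x,y)-K(c_Q,y)\big]+\Big[\phi\Big(\frac{x-y}{\epsilon}\Big)-\phi\Big(\frac{c_Q-y}{\epsilon}\Big)\Big]K(c_Q,y),
\end{equation*}
only the first summand is handled by the variation bound $\leq 1$ together with the kernel smoothness $\abs{K(x,y)-K(c_Q,y)}\lesssim\omega(2^{-k})/\abs{2^kQ}$. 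The second (cross) term has $\epsilon$-variation $\lesssim 2^{-k}\abs{K(c_Q,y)}\lesssim 2^{-k}/\abs{2^kQ}$, and establishing this requires $\phi\in C^1$ (mere monotonicity is not enough) plus the fact that $\abs{\,\abs{x-y}-\abs{c_Q-y}\,}\lesssim\ell(Q)\lesssim 2^{-k}\abs{x-y}$. This is exactly what the paper does by computing $\partial_\epsilon[K_\epsilon^\phi(x,y)-K_\epsilon^\phi(c_Q,y)]$ and splitting it into three terms: the first two arise from the cutoff difference and integrate up to $2^{-k}/\abs{2^kQ}$, and only the third is the kernel-difference term you describe. (One then absorbs $2^{-k}\lesssim\omega(2^{-k})$, which is harmless since $\omega$ may be replaced by $\omega(t)+t$ without affecting the hypotheses.)

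Your parenthetical alternative is worse than incomplete: it fails. After applying $T_\epsilon^\phi=\int_0^\infty\phi'(t)T_{\epsilon t}\ud t$ and Minkowski, you would need $\Norm{\{T_\epsilon(1_{(2Q)^c}f)(x)-T_\epsilon(1_{(2Q)^c}f)(c_Q)\}_{\epsilon}}{V^q}\lesssim\sum_k\omega(2^{-k})\fint_{2^{k+1}Q}\abs{f}$ for the \emph{sharp} truncation, and this is false. For a fine partition $\{\epsilon_i\}$, the annuli $\{\epsilon_i<\abs{\cdot-y}\leq\epsilon_{i+1}\}$ centered at $x$ and at $c_Q$ disagree on thin shells; each shell contributes $\abs{K(c_Q,y)}$, not $\abs{K(x,y)-K(c_Q,y)}$, to the variational sum, and the union of these shells over $i$ can cover all of $(2Q)^c$, producing $\sum_k\fint_{2^{k+1}Q}\abs{f}$ with no $\omega$-decay---a divergent series. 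The Minkowski step is genuinely lossy here: the smoothing by $\phi'$ acts on scale $\epsilon\gg\ell(Q)$ and is precisely what averages out the thin-shell jumps to produce the crucial $2^{-k}$ gain. This is also why the theorem is formulated for $V_q^\phi T$ with a smooth cutoff, not for the sharp $V_q T$.
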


\begin{proof}
For $x\in Q$, we have
\begin{equation*}
  \vec{T}^\phi f(x)=\vec{T}^\phi (1_{2Q}f)(x)+\vec{T}^\phi (1_{(2Q)^c}f)(x)-\vec{T}^\phi (1_{(2Q)^c}f)(c_Q)+\vec{T}^\phi (1_{(2Q)^c}f)(c_Q),
\end{equation*}
and
\begin{equation*}
\begin{split}
  &\Norm{\vec{T}^\phi (1_{(2Q)^c}f)(x)-\vec{T}^\phi (1_{(2Q)^c}f)(c_Q)}{V^q} \\
   &=\BNorm{\sum_{k=1}^{\infty}\int_{2^{k+1}Q\setminus 2^k Q} [\vec{K}^\phi (x,y)-\vec{K}^\phi (c_Q,y)]f(y)\ud y}{V^q} \\
   &\leq\sum_{k=1}^{\infty}\int_{2^{k+1}Q\setminus 2^k Q} \Norm{\vec{K}^\phi (x,y)-\vec{K}^\phi (c_Q,y)}{V^q}\abs{f(y)}\ud y 
\end{split}
\end{equation*}
For any $\vec{u}=\{u_{\epsilon}\}_{\epsilon>0}$, we have
\begin{equation*}
  \Norm{\vec{u}}{V^q}\leq\Norm{\vec{u}}{V^1}\leq\int_0^\infty\Babs{\frac{\ud u_{\epsilon}}{\ud\epsilon}}\ud\epsilon.
\end{equation*}
We apply this to $\vec{u}=\vec{K}^\phi (x,y)-\vec{K}^\phi (c_Q,y)$:
\begin{equation*}
\begin{split}
  \partial_{\epsilon}K_\epsilon^\phi(x,y)
  =\partial_{\epsilon}[\phi((x-y)/\epsilon)K(x,y)]
  =(y-x)/\epsilon^2\cdot\nabla\phi((x-y)/\epsilon)K(x,y),
\end{split}
\end{equation*}
and hence
\begin{equation*}
\begin{split}
  \partial_{\epsilon} [K_\epsilon^\phi(x,y)-K_\epsilon^\phi(c_Q,y)]
  &=\Big(\frac{y-x}{\epsilon^2}-\frac{y-c_Q}{\epsilon^2}\Big)\cdot\nabla\phi\Big(\frac{x-y}{\epsilon}\Big)K(x,y) \\
   &\qquad+\frac{y-c_Q}{\epsilon^2}\cdot\Big[\nabla\phi\Big(\frac{x-y}{\epsilon}\Big)-\nabla\phi\Big(\frac{c_Q-y}{\epsilon}\Big)\Big]K(x,y) \\
    &\qquad+\frac{y-c_Q}{\epsilon^2}\cdot\nabla\phi\Big(\frac{c_Q-y}{\epsilon}\Big)[K(x,y)-K(c_Q,y)].
\end{split}
\end{equation*}
For $x\in Q$, $y\in 2^{k+1}Q\setminus 2^k Q$, using
\begin{equation*}
  \abs{K(x,y)}\lesssim\frac{1}{\abs{2^k Q}},\qquad
  \abs{K(x,y)-K(c_Q,y)}\lesssim\frac{\omega(2^{-k})}{\abs{2^kQ}},
\end{equation*}
as well as
\begin{equation*}
\begin{split}
  \Babs{\nabla\phi\Big(\frac{x-y}{\epsilon}\Big)} &\lesssim 1_{[\abs{x-y},2\abs{x-y}]}(\epsilon), \\
  \Babs{\nabla\phi\Big(\frac{x-y}{\epsilon}\Big)-\nabla\phi\Big(\frac{c_Q-y}{\epsilon}\Big)}
  &\lesssim\frac{\abs{x-c_Q}}{\epsilon} 1_{[\abs{x-y},2\abs{x-y}]\cup [\abs{c_Q-y},2\abs{c_Q-y}]}(\epsilon),
\end{split}
\end{equation*}
we deduce that
\begin{equation*}
  \int_0^\infty\abs{\partial_\epsilon[K_\epsilon^\phi(x,y)-K_\epsilon^\phi(c_Q,y)]}\ud\epsilon
  \lesssim\frac{\ell(Q)}{2^k\ell(Q)}\frac{1}{\abs{2^k Q}}+\frac{\omega(2^{-k})}{\abs{2^k Q}}
  \lesssim\frac{\omega(2^{-k})}{\abs{2^k Q}}.
\end{equation*}

Hence
\begin{equation*}
\begin{split}
  \sum_{k=1}^{\infty} &\int_{2^{k+1}Q\setminus 2^k Q}\Norm{\vec{K}^\phi (x,y)-\vec{K}^\phi (c_Q,y)}{V^q}\abs{f(y)}\ud y \\
  &\lesssim\sum_{k=1}^{\infty}\omega(2^{-k})\int_{2^{k+1}Q}\abs{f(y)}\ud y,
\end{split}
\end{equation*}
and thus
\begin{equation*}
\begin{split}
  \abs{V_q^\phi Tf(x)-V_q^\phi T(1_{(2Q)^c}f)(c_Q)}
  &\leq\Norm{\vec{T}^\phi f(x)-\vec{T}^\phi (1_{(2Q)^c}f)(c_Q)}{V^q} \\
  &\leq V_q^\phi T(1_{2Q}f)(x)+\sum_{k=1}^{\infty}  \omega(2^{-k}) \fint_{2^{k+1}Q}\abs{f}.
\end{split}
\end{equation*}
It follows that
\begin{equation*}
\begin{split}
  \omega_\lambda(T_* f;Q)
  &=\inf_c((V_q^\phi Tf -c)1_Q)^*(\lambda\abs{Q}) \\
  &\leq\Big(\big(V_q^\phi Tf-V_q^\phi T(1_{(2Q)^c}f)(c_Q)\big)1_Q\Big)^* (\lambda\abs{Q}) \\
  &\leq (V_q^\phi T(1_{2Q}f))^*(\lambda\abs{Q})+\sum_{k=1}^{\infty}  \omega(2^{-k}) \fint_{2^{k+1}Q}\abs{f},
\end{split}
\end{equation*}
where finally
\begin{equation*}
  (V_q^\phi T(1_{2Q}f))^*(\lambda\abs{Q})
  \leq\frac{1}{\lambda\abs{Q}}\Norm{V_q^\phi T(1_{2Q}f)}{L^{1,\infty}}
  \lesssim\frac{1}{\abs{Q}}\Norm{1_{2Q}f}{L^1}\lesssim\fint_{2Q}\abs{f}
\end{equation*}
by the assumption that $V_q^\phi  T$ satisfies a weak $(1,1)$ inequality.
\end{proof}

\section{Consequences for weighted norm inequalities}

It is clear that the two dyadic domination theorems can be used as a general machine to transfer results about dyadic shifts into results about (the maximal truncation or $q$-variation of) Calder\'on--Zygmund operators. We present just a couple of examples of this phenomenon for illustration.

We recall two recent results:

\begin{proposition}[\cite{HytPer}, Theorem 1.10]\label{prop:M}
\begin{equation*}
  \Norm{M(f\sigma)}{L^p(w)}\lesssim\big([w,\sigma]_{A_p}[\sigma]_{A_\infty}\big)^{1/p}\Norm{f}{L^p(\sigma)}.
\end{equation*}
\end{proposition}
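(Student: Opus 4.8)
The plan is to reduce to a dyadic model and then run a stopping-cube (sparse) argument, spending the joint $A_p$ constant $[w,\sigma]_{A_p}=\sup_Q\ave{w}_Q\ave{\sigma}_Q^{p-1}$ to convert Lebesgue measure into $\sigma$-measure, and the Fujii--Wilson constant $[\sigma]_{A_\infty}=\sup_Q\sigma(Q)^{-1}\int_Q M(1_Q\sigma)$ to convert the $|\cdot|$-sparseness of the stopping family into $\sigma$-sparseness. By the one-third trick (in the spirit of the cube-selection used in the proof of Theorem~\ref{thm:dyadicDomination}, which bounds $M$ by a sum of $3^d$ shifted dyadic maximal operators), it suffices to treat $M=M^{\mathscr{D}}$ dyadic, and we may take $f\ge0$. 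Build the usual stopping family $\mathcal{S}\subset\mathscr{D}$: starting from a generation of large cubes, add below each $Q\in\mathcal{S}$ the maximal $Q'\subsetneq Q$ with $\ave{f\sigma}_{Q'}>2\ave{f\sigma}_Q$. Then the sets $E_Q:=Q\setminus\bigcup\{Q'\in\mathcal{S}:Q'\text{ a stopping child of }Q\}$ are pairwise disjoint with $|E_Q|\ge\tfrac12|Q|$, and the elementary sparse bound $M^{\mathscr{D}}(f\sigma)\lesssim\sum_{Q\in\mathcal{S}}\ave{f\sigma}_Q 1_{E_Q}$ holds.

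Raising to the power $p$, integrating against $w$, and using the disjointness of the $E_Q$ gives $\Norm{M(f\sigma)}{L^p(w)}^p\lesssim\sum_{Q\in\mathcal{S}}\ave{f\sigma}_Q^p\,w(Q)$. Now factor $\ave{f\sigma}_Q=\ave{\sigma}_Q\ave{f}_Q^\sigma$, where $\ave{f}_Q^\sigma:=\sigma(Q)^{-1}\int_Q f\sigma$, and isolate one power of $\ave{\sigma}_Q$:
\begin{equation*}
  \ave{\sigma}_Q^p\,w(Q)=\ave{\sigma}_Q\big(\ave{w}_Q\ave{\sigma}_Q^{p-1}\big)|Q|\le[w,\sigma]_{A_p}\,\sigma(Q),
\end{equation*}
so that $\Norm{M(f\sigma)}{L^p(w)}^p\lesssim[w,\sigma]_{A_p}\sum_{Q\in\mathcal{S}}(\ave{f}_Q^\sigma)^p\sigma(Q)$. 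It remains to prove
\begin{equation*}
  \sum_{Q\in\mathcal{S}}(\ave{f}_Q^\sigma)^p\sigma(Q)\lesssim_p[\sigma]_{A_\infty}\Norm{f}{L^p(\sigma)}^p.
\end{equation*}
By the weighted Carleson embedding theorem (whose constant is linear in the Carleson intensity), this follows once $\{\sigma(Q)\}_{Q\in\mathcal{S}}$ is shown to be a $\sigma$-Carleson sequence with intensity $\lesssim[\sigma]_{A_\infty}$, i.e.\ $\sum_{Q'\in\mathcal{S},\,Q'\subseteq Q}\sigma(Q')\lesssim[\sigma]_{A_\infty}\sigma(Q)$ for all $Q$. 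For this, note that for $Q'\subseteq Q$ one has $\ave{\sigma}_{Q'}=\ave{1_Q\sigma}_{Q'}\le M(1_Q\sigma)$ on $E_{Q'}$, hence $\sigma(Q')=|Q'|\ave{\sigma}_{Q'}\le 2|E_{Q'}|\ave{\sigma}_{Q'}\le 2\int_{E_{Q'}}M(1_Q\sigma)$; summing over the disjoint sets $E_{Q'}\subseteq Q$ and invoking the Fujii--Wilson definition of $[\sigma]_{A_\infty}$ gives the claim. Combining the displays and taking $p$-th roots yields the proposition.

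The main obstacle is the $A_\infty$-to-sparseness conversion in the last step, together with the careful bookkeeping of constants: the Carleson intensity must come out \emph{linear} in $[\sigma]_{A_\infty}$, and one must use the Carleson embedding theorem in its sharp form (constant depending only on $p$), for otherwise the final exponent would exceed the optimal $1/p$. Everything else---the one-third reduction, the stopping-time construction of $\mathcal{S}$, and the $[w,\sigma]_{A_p}$ manipulation---is routine.
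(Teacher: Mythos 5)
This proposition is quoted from \cite{HytPer} and is used in the paper as a black box; the paper contains no proof to compare against. On its own merits, your argument is correct and is essentially the proof in the cited reference: dyadic reduction, the Calder\'on--Zygmund stopping family with $|E_Q|\ge\tfrac12|Q|$ (so $M^{\mathscr{D}}(f\sigma)\lesssim\sum_{Q\in\mathcal{S}}\ave{f\sigma}_Q 1_{E_Q}$), the factorization $\ave{f\sigma}_Q=\ave{\sigma}_Q\ave{f}_Q^\sigma$ to spend one power of $[w,\sigma]_{A_p}$ and land on $\sigma(Q)$, the Fujii--Wilson $A_\infty$ bound giving the $\sigma$-Carleson condition with intensity $\lesssim[\sigma]_{A_\infty}$ via the disjoint $E_{Q'}$, and the sharp weighted Carleson embedding theorem (constant $(p')^p$, linear in the intensity). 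All the constant bookkeeping you flag as the delicate points does indeed work out as you describe, yielding the exponent $1/p$ on both constants after taking $p$-th roots.
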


\begin{proposition}[\cite{HytLac}, Propositions 3.2 and 5.3]\label{prop:S}
Let $S$ be a positive dyadic shift of complexity $k$. Then
\begin{equation*}
  \Norm{S(f\sigma)}{L^p(w)}\lesssim(1+k)[w,\sigma]_{A_p}^{1/p}\big([w]_{A_\infty}^{1/p'}+[\sigma]_{A_\infty}^{1/p}\big)\Norm{f}{L^p(\sigma)}.
\end{equation*}
\end{proposition}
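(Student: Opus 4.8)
The plan is to prove this by duality and a corona (stopping--cube) decomposition, following \cite{HytLac}. Write $\sigma=w^{1-p'}$ for the weight dual to $w$, the case relevant for the applications below, so that $[w,\sigma]_{A_p}=\sup_Q\ave{w}_Q\ave{\sigma}_Q^{p-1}$ with $\ave{\mu}_Q:=\abs{Q}^{-1}\mu(Q)$; also put $\ave{h}^\mu_Q:=\mu(Q)^{-1}\int_Q h\ud\mu$. After a preliminary stopping--time sparsification (this is where a sparse family enters), a positive dyadic shift of complexity $k$ reduces at bounded cost to an operator $Sh=\sum_{R\in\mathcal{S}}1_R\,\ave{h}_{R^{(k)}}$ over a sparse family $\mathcal{S}$; the shifts $S^u_k$ built above are already of this form, with $\mathcal{S}=\mathcal{L}^*_u$. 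By density and duality against $g\in L^{p'}(w)$ it then suffices to bound the bilinear form
\[
  \sum_{R\in\mathcal{S}}\ave{f\sigma}_{R^{(k)}}\ave{gw}_R\,\abs{R}
  =\sum_{R\in\mathcal{S}}\ave{f}^\sigma_{R^{(k)}}\,\ave{g}^w_R\,\ave{\sigma}_{R^{(k)}}\ave{w}_R\,\abs{R}.
\]

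First I would remove the complexity. The cheap move --- using $\sum_{R:\,R^{(k)}=P}1_R\le 1_P$ to replace $S$ by the complexity-zero operator $h\mapsto\sum_P 1_P\ave{h}_P$ --- is too lossy, since the family $\{R^{(k)}:R\in\mathcal{S}\}$ is sparse only with constant $\sim 2^{dk}$. Instead one telescopes the outer average along the chain $R\subset R^{(1)}\subset\cdots\subset R^{(k)}$ and splits the bilinear sum into $O(1+k)$ pieces in which the two averages live at comparable scales; summing them accounts for the announced factor $(1+k)$ and reduces everything to the complexity-zero estimate
\[
  \sum_{Q\in\mathcal{S}}\ave{f}^\sigma_Q\,\ave{g}^w_Q\,\ave{\sigma}_Q\ave{w}_Q\,\abs{Q}
  \lesssim[w]_{A_p}^{1/p}\big([w]_{A_\infty}^{1/p'}+[\sigma]_{A_\infty}^{1/p}\big)\Norm{f}{L^p(\sigma)}\Norm{g}{L^{p'}(w)}
\]
for sparse families $\mathcal{S}$.

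For this last estimate I would use two \emph{parallel} coronas: let $\mathcal{F}$ be the principal cubes of $f$ relative to $\sigma$ (maximal dyadic cubes on which $\ave{f}^\sigma$ exceeds twice its value on the $\mathcal{F}$-stopping parent) and $\mathcal{G}$ the principal cubes of $g$ relative to $w$. The key quantitative input is the sharp reverse H\"older inequality, which makes $\mathcal{F}$ a $\sigma$-Carleson family with constant $\lesssim[\sigma]_{A_\infty}$ and $\mathcal{G}$ a $w$-Carleson family with constant $\lesssim[w]_{A_\infty}$ --- this is exactly why the Fujii--Wilson $A_\infty$ characteristics, and not a cruder surrogate, appear. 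Splitting $\mathcal{S}$ according to whether the $\mathcal{F}$-parent of $Q$ sits below the $\mathcal{G}$-parent or conversely, on the first part one bounds $\ave{f}^\sigma_Q$ by twice its stopping value, releases a factor $[w]_{A_p}^{1/p}$ from $\ave{w}_Q\ave{\sigma}_Q^{p-1}\le[w]_{A_p}$, and closes by H\"older in $Q$ with exponents $(p,p')$: the $f$-side sum is estimated by the weighted Carleson embedding theorem against the $\sigma$-Carleson sequence $\{\ave{\sigma}_Q\abs{Q}\}$, producing $[\sigma]_{A_\infty}^{1/p}$, while the $g$-side is the trivial $L^{p'}(w)$-bound for $M^w g$. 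The second part is symmetric and yields the $[w]_{A_\infty}^{1/p'}$ term; adding the two gives the sum of $A_\infty$ factors in the statement.

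The main obstacle is the complexity reduction of the second paragraph: one must arrange the telescoping so that the loss is genuinely \emph{linear} in $k$ --- a naive argument costs $2^{dk}$ --- while keeping the sparse/Carleson structure that the corona argument needs. Once that is in place, the rest is routine: the corona construction, the sharp reverse H\"older inequality in the correct (Fujii--Wilson) normalization, and the final H\"older/Carleson-embedding bookkeeping, all carried out carefully enough to keep the exponents of $[w]_{A_p}$, $[w]_{A_\infty}$ and $[\sigma]_{A_\infty}$ exactly as stated.
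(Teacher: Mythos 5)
The paper itself does not prove Proposition~\ref{prop:S}; it is imported from \cite{HytLac} (their Propositions~3.2 and~5.3), so there is no internal argument to compare against. Judged against that source, your plan captures the right ingredients: duality; parallel corona (stopping-cube) decompositions for $f$ relative to $\sigma$ and for $g$ relative to $w$; the sharp reverse H\"older inequality to turn the stopping families into Carleson families with constants controlled by the Fujii--Wilson $A_\infty$ characteristics; and the weighted Carleson embedding theorem to close, producing exactly the announced split into $[\sigma]_{A_\infty}^{1/p}$ and $[w]_{A_\infty}^{1/p'}$ terms. That part is sound and is in the spirit of \cite{HytLac}.

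The genuine gap is the one you flag yourself: how to obtain the factor $(1+k)$ rather than $2^{dk}$. You propose a pre-processing step that telescopes $\ave{h}_{R^{(k)}}$ along $R\subset R^{(1)}\subset\cdots\subset R^{(k)}$ and ``splits the bilinear sum into $O(1+k)$ pieces,'' but this is left as a black box, and as described it does not obviously work: the telescoping increments $\ave{h}_{R^{(j+1)}}-\ave{h}_{R^{(j)}}$ are martingale differences without a sign, so the resulting $j$-th piece is \emph{not} a positive sparse operator, and the complexity-zero estimate you then invoke does not apply to it as stated. In \cite{HytLac} the linear dependence on $k$ is not obtained by such a preliminary reduction: their Proposition~3.2 is a two-weight $T1$-type \emph{testing theorem} for positive dyadic shifts in which the $(1+k)$ factor is produced inside the corona bookkeeping (the chain $R\subset\cdots\subset R^{(k)}$ interacts with the stopping cubes of both coronas, and the stopping conditions control the intermediate averages), while their Proposition~5.3 separately verifies that the testing constants are $\lesssim[w,\sigma]_{A_p}^{1/p}[\sigma]_{A_\infty}^{1/p}$ and dually. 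So the route you want to take --- reduce to complexity zero first, then run a single corona argument --- is genuinely different from the cited one and, in the form sketched, has a hole at precisely the step you identify as hardest. Two smaller points: you set $\sigma=w^{1-p'}$ at the outset, but the corollaries in this paper invoke the proposition for independent $w,\sigma\in A_\infty$, and nothing in your sketch uses the dual-weight relation, so keep the two weights free; and a general positive dyadic shift of complexity $k$ is not automatically of the sparse form $\sum_{R\in\mathcal{S}}1_R\ave{\cdot}_{R^{(k)}}$, so the ``preliminary stopping-time sparsification'' needs an actual argument, even though it is free for the particular shifts $S^u_k$ constructed in this paper.
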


In combination with Theorem~\ref{thm:dyadicDomination}, we get the following corollary. It is essentially known, but we obtain it for a slightly bigger class of Calder\'on--Zygmund kernels with a rather general modulus of continuity.

\begin{corollary}\label{cor:Ap}
Let $T$ be an $\omega$-Calder\'on--Zygmund operator, where the modulus of continuity satisfies
\begin{equation}\label{eq:DiniLog}
  \int_0^1\omega(t)\log\frac{1}{t}\frac{\ud t}{t}<\infty.
\end{equation}
Then for all $w,\sigma\in A_\infty$ and $p\in(1,\infty)$, we have
\begin{equation*}
   \Norm{T_*(f\sigma)}{L^p(w)}\lesssim [w,\sigma]_{A_p}^{1/p}\big([w]_{A_\infty}^{1/p'}+[\sigma]_{A_\infty}^{1/p}\big)\Norm{f}{L^p(\sigma)},
\end{equation*}
and in particular
\begin{equation*}
  \Norm{T_*f}{L^p(w)}\lesssim[w]_{A_p}^{\max\{1,1/(p-1)\}}\Norm{f}{L^p(w)}.
\end{equation*}
\end{corollary}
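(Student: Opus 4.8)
\textbf{Proof proposal for Corollary~\ref{cor:Ap}.}

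The plan is to derive the weighted bound for $T_*$ directly from the pointwise domination in Theorem~\ref{thm:dyadicDomination} together with the two quoted propositions. First I would fix a cube $Q_0$ and a function $f$ supported in $Q_0$; applying Theorem~\ref{thm:dyadicDomination} to $f\sigma$ in place of $f$ gives
\begin{equation*}
  1_{Q_0}T_*(f\sigma)\lesssim M(f\sigma)+\sum_{u\in\{0,\tfrac13,\tfrac23\}^d}\sum_{k=0}^{\infty}\omega(2^{-k})S^u_k(f\sigma).
\end{equation*}
Since everything on the right is nonnegative, taking $L^p(w)$ norms and the triangle inequality reduces matters to estimating $\Norm{M(f\sigma)}{L^p(w)}$ and each $\Norm{S^u_k(f\sigma)}{L^p(w)}$. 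The term $M(f\sigma)$ is handled by Proposition~\ref{prop:M}, whose bound $\big([w,\sigma]_{A_p}[\sigma]_{A_\infty}\big)^{1/p}$ is dominated by $[w,\sigma]_{A_p}^{1/p}\big([w]_{A_\infty}^{1/p'}+[\sigma]_{A_\infty}^{1/p}\big)$, so it is absorbed into the asserted constant. For each dyadic shift $S^u_k$, Proposition~\ref{prop:S} gives the bound with the extra factor $(1+k)$; here the key point is that the shifts in Theorem~\ref{thm:dyadicDomination} are genuine positive dyadic shifts of complexity $k$ with respect to the systems $\mathscr{D}^u$, so Proposition~\ref{prop:S} applies verbatim.

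Next I would sum over $k$. Combining the above, the total $L^p(w)$ norm is controlled by
\begin{equation*}
  [w,\sigma]_{A_p}^{1/p}\big([w]_{A_\infty}^{1/p'}+[\sigma]_{A_\infty}^{1/p}\big)\Big(1+3^d\sum_{k=0}^{\infty}(1+k)\,\omega(2^{-k})\Big)\Norm{f}{L^p(\sigma)},
\end{equation*}
and the series $\sum_{k\geq 0}(1+k)\omega(2^{-k})$ is finite precisely because of the strengthened Dini condition \eqref{eq:DiniLog}: indeed $\sum_k (1+k)\omega(2^{-k})\eqsim\int_0^1\omega(t)\log\tfrac1t\,\tfrac{\ud t}{t}<\infty$, the factor $1+k$ matching $\log(1/t)$ on the dyadic scale $t\sim 2^{-k}$. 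This yields the first displayed inequality of the corollary, initially for $f$ supported in a cube $Q_0$; a routine exhaustion argument (applying the estimate on an increasing sequence of cubes $Q_0$ and using monotone convergence, noting the implied constant does not depend on $Q_0$ or $f$) removes the support restriction.

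Finally, for the last inequality I would specialize $\sigma=w^{1-p'}$ (so $\sigma$ is the dual weight), in which case $[w,\sigma]_{A_p}=[w]_{A_p}$, while the Buckley-type bounds $[w]_{A_\infty}\leq[w]_{A_p}$ and $[\sigma]_{A_\infty}\leq[\sigma]_{A_p}=[w]_{A_p}^{p'-1}$ hold. Substituting $f$ by $f/\sigma=f w^{p-1}$ and bookkeeping the exponents gives $[w]_{A_p}^{1/p}\big([w]_{A_p}^{1/p'}+[w]_{A_p}^{(p'-1)/p}\big)=[w]_{A_p}^{1/p}\big([w]_{A_p}^{1/p'}+[w]_{A_p}^{1/(p-1)\cdot 1/p'}\cdot\ldots\big)$, which simplifies to $[w]_{A_p}^{\max\{1,1/(p-1)\}}$ after noting $1/p+1/p'=1$ and $1/p+(p'-1)/p=p'/p=1/(p-1)$. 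I expect the main obstacle to be purely bookkeeping: carefully checking that the complexity-$k$ shifts produced in Section~2 satisfy the exact hypotheses of Proposition~\ref{prop:S}, and tracking the $A_\infty$-characteristic exponents through the duality substitution so that the final exponent is exactly $\max\{1,1/(p-1)\}$; there is no substantial analytic difficulty once the pointwise domination theorem is in hand.
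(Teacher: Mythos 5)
Your proposal follows the paper's proof essentially verbatim: apply Theorem~\ref{thm:dyadicDomination} to $f\sigma$ on a cube $Q_0$, invoke Propositions~\ref{prop:M} and~\ref{prop:S} term by term, sum the geometric-with-logarithmic-weight series using~\eqref{eq:DiniLog}, and pass $Q_0\uparrow\R^d$ by monotone convergence; the deduction of the one-weight estimate by setting $\sigma=w^{1-p'}$ is the standard duality bookkeeping that the paper leaves implicit. The only blemish is a typo in the last paragraph where you write $[\sigma]_{A_p}$ for what should be $[\sigma]_{A_{p'}}=[w]_{A_p}^{p'-1}$, but the exponent arithmetic that follows is correct.
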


\begin{proof}
By Theorem~\ref{thm:dyadicDomination}, we have
\begin{equation*}
  \Norm{1_{Q_0}T_*(f\sigma)}{L^p(w)}
  \lesssim\Norm{M(f\sigma)}{L^p(w)}+\sum_{u\in\{0,\tfrac13,\tfrac23\}^d}\sum_{k=0}^{\infty}\omega(2^{-k})
      \Norm{S^u_k(f\sigma)}{L^p(w)}.
\end{equation*}
It suffices to apply Proposition~\ref{prop:M} to $M$ and Proposition~\ref{prop:S} to each $S^u_k$, and to observe that
\begin{equation*}
  \sum_{k=0}^{\infty}\omega(2^{-k})k\eqsim\int_0^1\omega(t)\log\frac{1}{t}\frac{\ud t}{t}<\infty.
\end{equation*}
Finally, we let $Q_0\uparrow\R^d$, and use monotone convergence.
\end{proof}

The following corollary to Theorem~\ref{thm:dyadicDominationVar} is completely new, and extends the sharp weighted estimates to the $q$-variation of singular integrals for the first time.

\begin{corollary}\label{cor:ApVar}
Let $T$ be an $\omega$-Calder\'on--Zygmund operator, where the modulus of continuity satisfies \eqref{eq:DiniLog}, and suppose that the smooth $q$-variation of $T$ satisfies the weak $(1,1)$ bound \eqref{eq:VqTweak11}.
Then for all $w,\sigma\in A_\infty$ and $p\in(1,\infty)$, we have
\begin{equation*}
   \Norm{V_q^\phi T(f\sigma)}{L^p(w)}\lesssim [w,\sigma]_{A_p}^{1/p}\big([w]_{A_\infty}^{1/p'}+[\sigma]_{A_\infty}^{1/p}\big)\Norm{f}{L^p(\sigma)},
\end{equation*}
and in particular
\begin{equation*}
    \Norm{V_q^\phi Tf}{L^p(w)}\lesssim[w]_{A_p}^{\max\{1,1/(p-1)\}}\Norm{f}{L^p(w)}.
\end{equation*}
\end{corollary}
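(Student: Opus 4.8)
The plan is to run exactly the argument used for Corollary~\ref{cor:Ap}, now feeding in Theorem~\ref{thm:dyadicDominationVar} in place of Theorem~\ref{thm:dyadicDomination}. Fix a cube $Q_0$ with $\supp(f\sigma)\subseteq Q_0$. Since the logarithmic Dini condition~\eqref{eq:DiniLog} implies the plain Dini condition~\eqref{eq:Dini}, and since the weak $(1,1)$ bound~\eqref{eq:VqTweak11} is assumed, Theorem~\ref{thm:dyadicDominationVar} applies to $f\sigma$ and gives
\[
  1_{Q_0}V_q^\phi T(f\sigma)\lesssim M(f\sigma)+\sum_{u\in\{0,\tfrac13,\tfrac23\}^d}\sum_{k=0}^\infty\omega(2^{-k})\,S^u_k\abs{f\sigma}.
\]
Taking $L^p(w)$-norms and using the triangle inequality,
\[
  \Norm{1_{Q_0}V_q^\phi T(f\sigma)}{L^p(w)}\lesssim\Norm{M(f\sigma)}{L^p(w)}+\sum_{u}\sum_{k=0}^\infty\omega(2^{-k})\Norm{S^u_k(f\sigma)}{L^p(w)}.
\]

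Next I would estimate the right-hand side by the two weighted bounds recalled in this section. Proposition~\ref{prop:M} bounds $\Norm{M(f\sigma)}{L^p(w)}$ by $\big([w,\sigma]_{A_p}[\sigma]_{A_\infty}\big)^{1/p}\Norm{f}{L^p(\sigma)}$, and Proposition~\ref{prop:S} bounds each $\Norm{S^u_k(f\sigma)}{L^p(w)}$ by $(1+k)[w,\sigma]_{A_p}^{1/p}\big([w]_{A_\infty}^{1/p'}+[\sigma]_{A_\infty}^{1/p}\big)\Norm{f}{L^p(\sigma)}$. The sum over $u$ is over a fixed set of $3^d$ values, and the sum over $k$ converges since $\sum_{k\ge0}(1+k)\omega(2^{-k})<\infty$: indeed $\sum_{k\ge0}k\,\omega(2^{-k})\eqsim\int_0^1\omega(t)\log\tfrac{1}{t}\,\tfrac{\ud t}{t}<\infty$ by~\eqref{eq:DiniLog}, while $\sum_{k\ge0}\omega(2^{-k})\eqsim\int_0^1\omega(t)\tfrac{\ud t}{t}<\infty$. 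This yields the first displayed inequality of the corollary with an implied constant independent of $Q_0$; letting $Q_0\uparrow\R^d$ and invoking monotone convergence removes the localization.

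For the ``in particular'' statement I would specialize to the dual weight $\sigma:=w^{1-p'}$, so that $[w,\sigma]_{A_p}=[w]_{A_p}$, while $[w]_{A_\infty}\le[w]_{A_p}$ and $[\sigma]_{A_\infty}\le[\sigma]_{A_{p'}}=[w]_{A_p}^{1/(p-1)}$ by the standard duality of $A_p$ characteristics. Substituting and collecting the exponents, the right-hand side becomes $\lesssim\big([w]_{A_p}+[w]_{A_p}^{1/(p-1)}\big)\Norm{f}{L^p(w)}\eqsim[w]_{A_p}^{\max\{1,1/(p-1)\}}\Norm{f}{L^p(w)}$, as claimed.

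I do not expect any real obstacle here: the proof is word-for-word that of Corollary~\ref{cor:Ap}, the only change being the use of Theorem~\ref{thm:dyadicDominationVar}. The one point deserving a line of comment is that Theorem~\ref{thm:dyadicDominationVar} takes the weak $(1,1)$ bound for the \emph{smooth} $q$-variation as a hypothesis rather than deriving it; this is harmless, since it is among the standing assumptions of the corollary, and by Remark~\ref{rem:variation} it holds for the classical convolution Calder\'on--Zygmund operators under a H\"ormander-type condition implied by~\eqref{eq:Dini}.
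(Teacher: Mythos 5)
Your proof is correct and is precisely the paper's intended argument: the paper's proof of Corollary~\ref{cor:ApVar} simply says to repeat the proof of Corollary~\ref{cor:Ap} with Theorem~\ref{thm:dyadicDominationVar} in place of Theorem~\ref{thm:dyadicDomination}, which is exactly what you do, including the convergence of $\sum_k(1+k)\omega(2^{-k})$ from~\eqref{eq:DiniLog}, the passage $Q_0\uparrow\R^d$ by monotone convergence, and the standard specialization $\sigma=w^{1-p'}$ for the ``in particular'' statement.
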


\begin{proof}
Repeat the proof of Corollary~\ref{cor:Ap} with obvious changes.
\end{proof}

\bibliography{weighted}

\begin{thebibliography}{1}

\bibitem{CJRW:Hilbert}
James~T. Campbell, Roger~L. Jones, Karin Reinhold, and M{\'a}t{\'e} Wierdl.
\newblock Oscillation and variation for the {H}ilbert transform.
\newblock {\em Duke Math. J.}, 105(1):59--83, 2000.

\bibitem{CJRW:higher}
James~T. Campbell, Roger~L. Jones, Karin Reinhold, and M{\'a}t{\'e} Wierdl.
\newblock Oscillation and variation for singular integrals in higher
  dimensions.
\newblock {\em Trans. Amer. Math. Soc.}, 355(5):2115--2137 (electronic), 2003.

\bibitem{Hytonen:A2}
Tuomas Hyt\"onen.
\newblock The sharp weighted bound for general {Calder\'on-Zygmund} operators.
\newblock {\em Ann. of Math.}
\newblock (to appear). Preprint, arXiv:1007.4330 (2010).

\bibitem{HytLac}
Tuomas Hyt\"onen and Michael~T. Lacey.
\newblock The {$A_p-A_\infty$} inequality for general {Calder\'on--Zygmund}
  operators.
\newblock {\em Indiana Univ. Math. J.}
\newblock (to appear). Preprint, arXiv:1106.4797 (2011).

\bibitem{HytPer}
Tuomas Hyt\"onen and Carlos P\'erez.
\newblock Sharp weighted bounds involving {$A_\infty$}.
\newblock {\em Anal. PDE}.
\newblock (to appear). Preprint, arXiv:1103.5562 (2011).

\bibitem{Lerner:posDyad}
Andrei~K. Lerner.
\newblock On an estimate of {Calder\'on--Zygmund} operators by dyadic positive
  operators.
\newblock Preprint, arXiv:1202.1860 (2012).

\bibitem{Lerner:formula}
Andrei~K. Lerner.
\newblock A pointwise estimate for the local sharp maximal function with
  applications to singular integrals.
\newblock {\em Bull. Lond. Math. Soc.}, 42(5):843--856, 2010.

\bibitem{NTV:Tb}
Fedja Nazarov, Sergei Treil, and Alexander Volberg.
\newblock The {$Tb$}-theorem on non-homogeneous spaces.
\newblock {\em Acta Math.}, 190(2):151--239, 2003.

\end{thebibliography}
\bibliographystyle{plain}

\end{document}